\documentclass[12pt]{article}

\usepackage{amsmath}
\usepackage{amssymb}
\usepackage{amsfonts}
\usepackage{amsthm}

\newtheorem{theorem}{Theorem}
\newtheorem{definition}[theorem]{Definition}

\newtheorem{example}{Example}

\usepackage[body={8.5in,11in},margin=1in]{geometry}
\usepackage{enumerate}
\usepackage{setspace}
\usepackage{graphicx}
\usepackage[all]{xy}
\newenvironment{itemize*}%
  {\begin{itemize}%
    \setlength{\itemsep}{0pt}%
    \setlength{\parskip}{0pt}}%
  {\end{itemize}}
\newenvironment{enumerate*}%
  {\begin{enumerate}%
    \setlength{\itemsep}{0pt}%
    \setlength{\parskip}{0pt}}%
  {\end{enumerate}}

\DeclareMathSymbol{\Z }{\mathbin}{AMSb}{"5A}
\DeclareMathSymbol{\C}{\mathbin}{AMSb}{"43}
\DeclareMathSymbol{\N}{\mathbin}{AMSb}{"4E}
\DeclareMathSymbol{\Q}{\mathbin}{AMSb}{"51}
\DeclareMathSymbol{\R}{\mathbin}{AMSb}{"52}

\title{Irreducibles in the Integers modulo $n$}
\author{James Lanterman \\ \small{ Supervised by Dr. Jeremiah Reinkoester}}
\date{}
\begin{document}
\maketitle 

\begin{section}{Introduction}
\quad In 2011, Anderson and Frazier introduced a general theory of factorization of elements in integral domains \cite{anderson}.
Given a relation $\tau$ on an integral domain $D$, they defined a $\tau$-factorization of an element $a\in D$ by $a=\lambda a_1...a_n$ where $\lambda$ is a unit in $D$ and $a_i \tau a_j$ for all $i, j$. They briefly investigated the irreducible and prime elements of the integers under the congruence modulo $n$ relation (denoted $\tau_n$). This paper further investigates the irreducible integers under the $\tau_n$ relation for particular values of $n$ in the hopes of finding a general form in which to express them. We were successful in finding all irreducible integers under the $\tau_n$ relation for $n= 2, 3, 4, 5, 6, 7,$ and $11$ by making use of another equivalence relation based on $\tau_n$, and we were able to find a general form for all these irreducibles in Theorem \ref{generalization}.
\end{section}
\begin{section}{Buildup}
We begin with a few definitions for clarity. 
\begin{definition}{Two integers x and y are said to be \emph{$\tau_n$-related}, denoted x $\tau_n$ y, if, and only if, x $\equiv$ y (mod n).} \end{definition}
\begin{definition}{For an integer $x, x=\lambda a_1 a_2...a_k$ is called a \emph{$\tau_n$-factorization of $x$} if $\lambda\in U(\Z)$ and $a_i\tau_n a_j$ for all $i, j$. We say the $\tau_n$-factorization is \emph{proper} if $k>1$.} \end{definition}
\begin{definition}{If a proper $\tau_n$-factorization of an integer $x$ does not exist, $x$ is a \emph{$\tau_n$-atom}.}\end{definition}
\begin{definition}{A positive integer $x$ is a \emph{$\tau_n$-prime} if, whenever $x$ divides a $\tau_n$-factorization $\lambda a_1...a_k$, then $x$ divides $a_i$ for some $i$.} \end{definition}
To avoid confusion, we shall use the term ``usual prime" when referencing the standard idea of prime numbers in the integers. It is worth mentioning that, clearly, all of the usual primes are $\tau_n$-primes.

An example is in order at this point to ensure understanding. 

\begin{example}\emph{Consider the integer $98=2*7*7$. Below are  3 possible factorizations of $98$: 
\[
	\begin{array}{c}
	2*7*7\\
	2*49\\
	-14*-7
	\end{array}
\]
}
\emph{Since $-7\tau_7 -14$, then $-14*-7$ is a $\tau_7$-factorization of $98$. However, none of these factorizations are $\tau_2$-factorizations, and indeed none exist; thus $98$ is a $\tau_2$-atom. This warrants the question: is $98$ a $\tau_2$-prime? Notice that $196=14*14$, which is clearly a $\tau_2$-factorization, and $98|196$, but $98\not| 14$: thus, $98$ is not a $\tau_2$-prime. $14$, however, is a $\tau_2$-prime: notice that if $14$ divides some $\tau_2$-factorization $p_1p_2...p_k$, then $2$ must also divide it, and so $p_i\tau_2 0$ for some $i$. Since $p_i\tau_2 p_j$ for all $i,j$, then each term in this product must be $\tau_2$-related to $0$; that is, they are all divisible by $2$. Further, since $14$ divides the product, then $7$ must also divide it, and since $7$ is a usual prime it must divide some $p_m$. Since both $2$ and $7$ divide this $p_m$, then $14|p_m$, and so $14$ is a $\tau_2$-prime. }
\end{example}

This example illustrates that some $\tau_n$-atoms may not be $\tau_n$-primes. However, $\tau_n$-primes are all, in fact, $\tau_n$-atoms; this is shown in \cite{anderson}. An alternative proof is given below.

\begin{theorem}\label{primeatom} If an integer $x$ is a $\tau_n$-prime, then $x$ is a $\tau_n$-atom.
\end{theorem}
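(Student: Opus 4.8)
The plan is to establish the contrapositive: if $x$ is \emph{not} a $\tau_n$-atom, then $x$ is \emph{not} a $\tau_n$-prime. So I would begin by assuming that $x$ admits a proper $\tau_n$-factorization
\[
	x = \lambda a_1 a_2 \cdots a_k, \qquad k \geq 2,
\]
where $\lambda \in U(\Z)$, the factors $a_i$ are nonunits, and $a_i \tau_n a_j$ for all $i,j$. The goal is then to produce a single $\tau_n$-factorization that $x$ divides while $x$ divides none of its factors, since that is precisely the failure of the defining property of a $\tau_n$-prime.

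The key observation I would exploit is that the factorization already in hand serves as its own witness. On the one hand, $x$ divides the product $\lambda a_1 \cdots a_k$ trivially, because that product equals $x$. On the other hand, I would show $x \nmid a_i$ for every $i$ by a size argument on absolute values. Passing to absolute values gives $|x| = |a_1|\cdots|a_k|$, and since each factor is a nonunit we have $|a_i| \geq 2$; because there are $k-1 \geq 1$ remaining factors, each of absolute value at least $2$, we obtain
\[
	|x| = |a_i| \prod_{j \neq i} |a_j| \geq 2\,|a_i| > |a_i| > 0
\]
for every $i$. Thus $0 < |a_i| < |x|$, and a nonzero integer of absolute value strictly smaller than $|x|$ cannot be a multiple of $x$; hence $x \nmid a_i$.

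Combining the two points, $\lambda a_1 \cdots a_k$ is a $\tau_n$-factorization that $x$ divides yet none of whose factors $x$ divides, so $x$ is not a $\tau_n$-prime. By contraposition, every $\tau_n$-prime is a $\tau_n$-atom, which is the claim.

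I expect the one delicate step to be $x \nmid a_i$, and specifically the reliance on the factors $a_i$ being nonunits; this is exactly where the argument has teeth. The condition is genuinely necessary rather than cosmetic: were units admitted as factors, then for instance $3 = 3 \cdot 1$ would count as a proper $\tau_2$-factorization, while $3$ is a usual prime and hence a $\tau_2$-prime, and the statement would break. Once the nonunit hypothesis is made explicit, it forces $|a_i| < |x|$ through the product identity $|x| = \prod_j |a_j|$, and the conclusion then drops out of the given factorization directly, with no need to manufacture any auxiliary product such as $x^2$.
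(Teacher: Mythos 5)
Your proof is correct and is essentially the paper's own argument, just phrased as a contrapositive rather than a contradiction: both use the proper factorization of $x$ itself as the witnessing product that $x$ divides, then rule out $x \mid a_i$ by a size comparison. If anything your version is the more careful one, since the paper asserts only ``$x_j < x$'' whereas your argument via $|x| = \prod_j |a_j|$ and $|a_i| \geq 2$ handles negative factors correctly and makes explicit where the nonunit hypothesis is used.
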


\begin{proof}
Let integer $x$ be a $\tau_n$-prime. Then if $x$ divides some $\tau_n$-factorization $\pm p_1p_2...p_k$, $x|p_i$ for some $i$. Suppose, by way of contradiction, that $x$ is not a $\tau_n$-atom. Then there must exist some proper $\tau_n$-factorization for $x$; denote it by $\pm x_1x_2...x_m$. Notice, then, that $x|\pm x_1x_2...x_m$. Hence, $x|x_j$ for some $j$. However, since $x_1x_2...x_m$ is a \emph{proper} $\tau_n$-factorization, then necessarily $x_j<x$ for all $j$, and so $x\nmid x_j$ for all $j$. A contradiction arises; thus, $x$ must be a $\tau_n$-atom. 
\end{proof}

In their paper \cite{anderson}, Anderson and Frazier explored the $\tau_2$-atoms and $\tau_2$-primes in particular. They were able to show that the $\tau_2$-primes were of the form $2p$ for usual prime $p\neq 2$, and that the $\tau_2$-atoms were of the form $2p_1p_2...p_k$ for usual primes $p_i\neq 2$.

Anderson and Frazier were also able to find a general form for the $\tau_n$-primes for any value of $n$:
\begin{theorem} \label{primechar} An integer $b$ is a $\tau_n$-prime if, and only if, $b=p_1^{e_1}p_2^{e_2}...p_k^{e_k}q$, where $e_i=1$ or $0$ for all $i$, $p_j$ is a usual prime which divides $n$ for all $j$, and $q$ is a usual prime which does not divide $n$ or $q=1$.
\end{theorem}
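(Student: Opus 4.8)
The plan is to prove both implications, with the forward direction (prime $\Rightarrow$ stated form) handled by contraposition. The engine for both directions is a single \emph{spreading} observation: if $p$ is a usual prime dividing $n$ and $p$ divides a product $a_1\cdots a_t$ whose factors are pairwise $\tau_n$-related, then $p$ divides \emph{every} $a_i$. Indeed, $p$ divides some $a_{i_0}$ by Euclid's lemma; since $p\mid n$, the common residue $r$ of the $a_i$ modulo $n$ satisfies $r\equiv a_{i_0}\equiv 0\pmod p$, so $p\mid a_i$ for all $i$. I would state and prove this as a short lemma first, as it isolates the feature that makes primes dividing $n$ behave differently from those coprime to $n$.

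For the ($\Leftarrow$) direction, suppose $b=p_1^{e_1}\cdots p_k^{e_k}q$ has the stated form and $b$ divides a $\tau_n$-factorization $\lambda a_1\cdots a_t$. Each $p_j$ with $e_j=1$ divides $n$ and divides the product, so by the spreading lemma it divides every factor; as these $p_j$ are distinct primes, their product divides every $a_i$. If $q=1$ this product \emph{is} $b$ and we are done. If instead $q$ is a usual prime not dividing $n$, then $q$ divides the product, so $q\mid a_{i_1}$ for some $i_1$ by Euclid; since $q$ is distinct from each $p_j$ (as $q\nmid n$) and every $p_j$ already divides $a_{i_1}$, the full product $b$ divides $a_{i_1}$. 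Either way $b$ divides some factor, so $b$ is a $\tau_n$-prime.

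For the ($\Rightarrow$) direction I would prove the contrapositive: if $b$ is \emph{not} of the stated form, I exhibit a $\tau_n$-factorization that $b$ divides but no single factor of which is divisible by $b$. A positive integer fails the form in exactly one of two ways. First, if $b$ is not squarefree, let $a$ be the product of the distinct prime divisors of $b$ and let $e\geq 2$ be the largest exponent in the prime factorization of $b$; then the factorization $a^e=a\cdot a\cdots a$ is a $\tau_n$-factorization (equal factors are trivially $\tau_n$-related), $b\mid a^e$ since every prime of $b$ occurs to exponent $e$ in $a^e$, yet $b\nmid a$ because $a$ is squarefree while $b$ is not. This mirrors the $98=14\cdot 14$ example. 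Second, if $b$ is squarefree but has two distinct prime factors $q_1\neq q_2$ with $q_1,q_2\nmid n$, I would build two factors by the Chinese Remainder Theorem: since $n,q_1,q_2$ are pairwise coprime, choose $a_1\equiv 0\pmod n$, $a_1\equiv 0\pmod{q_1}$, $a_1\equiv 1\pmod{q_2}$, and $a_2\equiv 0\pmod n$, $a_2\equiv 1\pmod{q_1}$, $a_2\equiv 0\pmod{q_2}$ (loading any further coprime-to-$n$ prime factors of $b$ into $a_2$). Then $a_1\equiv a_2\equiv 0\pmod n$ makes $a_1a_2$ a $\tau_n$-factorization, every prime divisor of $b$ divides $a_1a_2$ so $b\mid a_1a_2$, while $q_2\nmid a_1$ and $q_1\nmid a_2$ force $b\nmid a_1$ and $b\nmid a_2$.

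The non-squarefree case is immediate, so I expect the main obstacle to be the squarefree case: first, recognizing that repeated primes and the presence of two or more primes coprime to $n$ are the \emph{only} obstructions (so that the complement of the two bad cases is precisely the stated form), and second, engineering the CRT residues so that the offending primes $q_1,q_2$ are split across the two factors while every prime dividing $n$ is forced into both. Getting the ``$b\mid a_1a_2$ yet $b\nmid a_i$'' bookkeeping exactly right, and confirming that positive representatives of the residues may be chosen, is where the care lies.
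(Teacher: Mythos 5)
Your proof is correct, but note that the paper itself offers no proof of this theorem to compare against: it is quoted from Anderson and Frazier \cite{anderson} and stated without argument. Judged on its own, your write-up is sound and complete. The ``spreading'' lemma is valid (if $p\mid n$ and $p$ divides one factor, it divides the common residue and hence every factor), and it is exactly the mechanism the paper itself uses informally in Example 1 to show $14$ is a $\tau_2$-prime; your ($\Leftarrow$) direction assembles it correctly, using squarefreeness to conclude that the distinct $p_j$ jointly divide each $a_i$ and then splitting off the at-most-one prime $q$ coprime to $n$. Your contrapositive correctly identifies the complement of the stated form as ``not squarefree'' or ``squarefree with two distinct prime factors coprime to $n$'': the radical construction $b\mid a^e$, $b\nmid a$ generalizes the paper's $98=14\cdot 14$ example, and the CRT construction legitimately splits $q_1$ and $q_2$ across two factors that are both $\equiv 0\pmod n$ (so the two-term factorization is a $\tau_n$-factorization) while every prime of $b$ lands in one of them. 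The only points worth tightening in a final version are cosmetic: the two failure modes are not mutually exclusive (a non-squarefree $b$ may also have two primes coprime to $n$), so phrase the case split as ``first handle non-squarefree $b$, then assume $b$ squarefree''; and one should remark that the chosen CRT representatives are nonzero nonunits, which is automatic since each is a nonzero multiple of $n$.
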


While this result is quite good, the $\tau_n$-primes are but a subset of the $\tau_n$-atoms. Thus, we investigated the $\tau_n$-atoms for different values of $n$ in the interest of finding a general form, similar to that for $\tau_n$-primes in Theorem \ref{primechar}.
\end{section}

\begin{section}{The $\tau_3$-atoms, $\tau_4$-atoms, and $\tau_6$-atoms}

\quad It is logical to start with the next least complicated structure, that of the integers under the $\tau_3$-relation. We already know that the usual primes are $\tau_n$-atoms for all values of $n$ by Theorem \ref{primeatom}, but (as was the case with the $\tau_2$-atoms) there are almost certainly more. For example, we know by Theorem \ref{primechar} that $3p$ is a $\tau_3$-prime (and thus a $\tau_3$-atom) for all usual primes $p$. However, when investigating the $\tau_3$-atoms, an interesting complication arose: that of factorizations involving negative factors. Since $U(\Z)=\{\pm 1\}$, then any number of factors can be negated merely be introducing the appropriate number of factors of $-1$. For example, consider the proper factorizations of $28$. It may be tempting to assume that only $3$ exist (namely, $2*2*7, 4*7,$ and $2*14$), but in fact $13$ possible proper factorizations exist by simply negating different combinations of factors. This complication, however, is easily solved by recalling that $-k\tau_n(n-k)$ for all $k, n$. Thus, if an integer $x$ is $\tau_3$-related to $2$, then $-x\tau_3 1$. With this in mind, we can show the following:

\begin{theorem}\label{tau3atoms} The $\tau_3$-atoms consist of the usual primes and integers of the form $3p_1p_2...p_m$, where $p_i$ is a prime not equal to $3$ for all $i$. \end{theorem}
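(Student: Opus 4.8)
The plan is to classify every positive integer $x$ by its prime factorization and decide, in each case, whether a proper $\tau_3$-factorization exists; this settles both directions of the characterization simultaneously. Negative integers differ from their absolute values only by the unit $\lambda=-1$ and so inherit the same classification, so I restrict attention to $x>0$. Throughout I take the factors $a_i$ of a $\tau_3$-factorization to be nonunits, as in the standard theory and as the running example assumes. The two structural facts I will lean on are: since $3$ is a usual prime, if $3$ divides a product then it divides some factor; and the remark that $-k\,\tau_n\,(n-k)$, which lets me flip a factor's residue modulo $3$ by negating it.

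First I would dispose of the case $3\nmid x$. A usual prime is a $\tau_3$-atom immediately, since it admits no factorization into two or more nonunits (this also follows from Theorem~\ref{primeatom}, as usual primes are $\tau_n$-primes). It then remains to show every composite $x$ with $3\nmid x$ fails to be an atom. Write $x=ab$ with $a,b$ nonunits; since $3\nmid x$, neither factor is divisible by $3$, so each lies in residue class $1$ or $2$ modulo $3$. If $a\equiv b\pmod 3$, then $x=ab$ is already a proper $\tau_3$-factorization. If instead one factor, say $b$, is $\equiv 2$ while $a\equiv 1\pmod 3$, I negate the factor in class $2$: since $-b\equiv 1\pmod 3$, the factors $a$ and $-b$ are congruent modulo $3$, and $x=(-1)\,a\,(-b)$ is a proper $\tau_3$-factorization with unit $\lambda=-1$. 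Thus the atoms coprime to $3$ are exactly the usual primes.

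Next I would treat $3\mid x$, writing $x=3^{e}t$ with $3\nmid t$. If $e\geq 2$, then $x=3\cdot(3^{e-1}t)$ expresses $x$ as a product of two nonunits each divisible by $3$, hence each $\equiv 0\pmod 3$; this is a proper $\tau_3$-factorization, so $x$ is not an atom. If $e=1$, I claim $x=3t$ is always an atom: suppose $x=\lambda a_1\cdots a_k$ is a proper $\tau_3$-factorization with $k\geq 2$. Since $3\mid x$, some $a_i\equiv 0\pmod 3$, and because all factors share one residue class, every $a_j\equiv 0\pmod 3$; hence $3^{k}$ divides $x$, forcing $9\mid x$ and contradicting $e=1$. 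Writing $t=p_1\cdots p_m$ as a product of primes different from $3$, the atoms divisible by $3$ are exactly the integers $3p_1\cdots p_m$, completing the classification.

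The step I expect to require the most care is the composite case with $3\nmid x$: I must verify the negation trick always yields genuinely nonunit factors (it does, as negation preserves absolute value) and that the residue bookkeeping is exhaustive, covering both the matched-residue and mixed-residue subcases. A secondary point to state cleanly is that the form $3p_1\cdots p_m$ is intended to include the empty product (giving $3$ itself) and to permit repeated $p_i$, and that $3$, being prime, also sits among the usual primes; I would remark on this overlap so that the two families in the statement are unambiguous.
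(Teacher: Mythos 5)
Your proof is correct and follows essentially the same route as the paper: you case on the multiplicity of $3$ in $x$, use the factorization $3\cdot(x/3)$ when $9\mid x$, use the sign-flip $-k\,\tau_3\,(3-k)$ to reduce composites coprime to $3$, and argue that a multiplicity-one factor of $3$ blocks any $\tau_3$-factorization. The only cosmetic difference is that in the coprime case you negate at most one factor of a two-term splitting $x=ab$, whereas the paper negates all prime factors congruent to $2$ and tracks the parity of their count; both rest on the same observation.
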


\begin{proof}
By Theorem \ref{primeatom}, we know that the usual primes are $\tau_3$-atoms. 

Let $k$ be a $\tau_3$-atom with usual prime factorization $p_1p_2...p_m$, $m>1$. We shall proceed by considering cases based on the multiplicity of $3$ in $k$.

\emph{Case 1}: Suppose $3$ does not divide $k$. Then for all $i$, $p_i\tau_3 1$ or $p_i\tau_3 2$. Suppose that $b$ of the $p_i$ terms are $\tau_3$-related to $1$ and $m-b$ of the $p_i$ terms are $\tau_3$-related to $2$. Then if $m-b$ is even, simply negating all the $p_i$ terms $\tau_3$-related to $2$ will yield a $\tau_3$-factorization; similarly, if $m-b$ is odd, then negate all the $p_i$ terms $\tau_3$-related to $2$ and introduce a factor of the unit $-1$, yielding a $\tau_3$-factorization. Either way, $k$ has been shown to have a $\tau_3$-factorization, yet $k$ is a $\tau_3$-atom by hypothesis - a contradiction. Thus, $3$ must divide $k$. 

\emph{Case 2}: Suppose $3$ divides $k$ twice or more. Then the usual prime factorization of $k$ is of the form $3*3p_3...p_m$. But $3*(3p_3...p_m)$ is a $\tau_3$-factorization of $k$, since both factors are $\tau_3$-related to $0$; thus, $k$ is not a $\tau_3$-atom, giving rise to another contradiction. Thus, $3$ must divide $k$ exactly once. It only remains to be shown that any integer for which this holds is indeed a $\tau_3$-atom.

\emph{Case 3}: Suppose $3$ divides $k$ exactly once. Then the usual prime factorization of $k$ is of the form $3p_2...p_m$, where $p_i\neq 3$ for all $i$. Notice that, in any grouping of these factors, whichever factor is divisible by $3$ must be $\tau_3$-related to $0$, while all the other factors cannot possibly be $\tau_3$-related to $0$, since they are necessarily not divisible by $3$. Thus, no $\tau_3$-factorization for $k$ exists, and so $k$ is a $\tau_3$-atom. 
\end{proof}

Notice that the $\tau_3$-atoms are quite similar to the $\tau_2$-atoms: we know the $\tau_2$-atoms (other than the usual primes) are of the form $2p_1p_2...p_k$ where $p_i$ is a prime not equal to $2$, while the above theorem shows that the $\tau_3$-atoms are of the form $3q_1q_2...q_k$ where $q_i$ is a prime not equal to $3$. At this point, one cannot help but wonder whether an integer of the form $np_1p_2...p_k$ where $n$ does not divide $p_1p_2...p_k$ is a $\tau_n$-atom for any $n$. This is not always the case (consider that $8$ divides $16$ exactly once, as desired, yet $4*4$ is a $\tau_8$-factorization of $16$), but it \emph{does} always hold when $n$ is a usual prime. 

\begin{theorem}\label{onefactoratom} For a positive prime integer $n$, any integer of the form $np_1p_2...p_k$ where $n$ does not divide $p_1p_2...p_k$ (that is, an integer in which the multiplicity of $n$ is exactly 1) is a $\tau_n$-atom. If the multiplicity of $n$ is greater than $1$ in any integer, then that integer must not be a $\tau_n$-atom.
\end{theorem}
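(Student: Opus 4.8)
The plan is to prove the two assertions separately, because they rest on different hypotheses: the first uses that $n$ is prime, whereas the second holds for every $n$.

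For the first assertion I would argue by contradiction, generalizing Cases 2 and 3 of the proof of Theorem \ref{tau3atoms}. Suppose $x = np_1p_2\cdots p_k$ with $n \nmid p_1\cdots p_k$, so that the multiplicity of $n$ in $x$ is exactly $1$, and suppose toward a contradiction that $x$ has a proper $\tau_n$-factorization $x = \lambda a_1a_2\cdots a_j$ with $j>1$ and $a_s\,\tau_n\,a_t$ for all $s,t$. Since $\lambda=\pm1$ and $n\geq 2$ is prime, $n\nmid\lambda$, so $n\mid x$ forces $n\mid a_1a_2\cdots a_j$; Euclid's lemma then gives $n\mid a_s$ for some $s$, i.e. $a_s\equiv 0\pmod n$. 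But every factor is $\tau_n$-related to $a_s$, hence $a_t\equiv 0\pmod n$ for all $t$, so $n$ divides each of the $j\geq 2$ factors and $n^2\mid a_1a_2\cdots a_j=\pm x$. This contradicts the multiplicity of $n$ in $x$ being exactly $1$, so no proper $\tau_n$-factorization exists and $x$ is a $\tau_n$-atom.

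For the second assertion I would give a direct construction instead of an indirect argument. If $n$ has multiplicity greater than $1$ in an integer $x$, then $n^2\mid x$, so $n\mid(x/n)$; writing $x=n\cdot(x/n)$ exhibits two factors that are both $\equiv 0\pmod n$ and therefore $\tau_n$-related. This is a proper $\tau_n$-factorization, so $x$ is not a $\tau_n$-atom. Notice this construction never uses primality of $n$, which explains why the analogue of the first assertion can fail for composite $n$, as the factorization $4*4$ of $16$ under $\tau_8$ shows.

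I do not expect a serious obstacle here; the single point needing care is the role of primality in the first assertion. Primality is exactly what lets Euclid's lemma funnel the factor of $n$ into one of the $a_s$, after which the defining congruence of a $\tau_n$-factorization propagates divisibility by $n$ to every factor and produces the forbidden second factor of $n$. I would therefore be careful to record explicitly that $\lambda=\pm1$ is coprime to $n$, so that the hypothesis $n\mid x$ genuinely transfers to the product $a_1\cdots a_j$ rather than being absorbed by the unit.
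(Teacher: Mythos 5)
Your proof is correct and follows essentially the same route as the paper's: Euclid's lemma places the factor of $n$ in exactly one term of any proper factorization, the mutual congruence condition then forces every term to be divisible by $n$ (contradicting multiplicity one), and the second assertion is handled by the explicit factorization $n\cdot(x/n)$. Your write-up is somewhat more explicit than the paper's (which argues directly that the remaining factors cannot be $\tau_n$-related to $0$ rather than deriving $n^2\mid x$), but the ideas are identical.
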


\begin{proof}
Let $x$ be an integer with prime factorization $np_1p_2...p_k$ where $n$ does not divide $p_1p_2...p_k$. Since $n$ is a usual prime, then, similar to Case 3 of the proof of Theorem \ref{tau3atoms}, it can be seen that in any grouping of the factors of $x$, one factor (namely, the one which $n$ divides) must be $\tau_n$-related to $0$, while the others necessarily must not be $\tau_n$-related to $0$. Thus, no $\tau_n$-factorization of $x$ exists, and so $x$ must be a $\tau_n$-atom.

Let $y$ be an integer in which the multiplicity of $n$ is at least $2$. Then, as in Case 2 of Theorem \ref{tau3atoms}, it can be seen that we may simply take the $\tau_n$-factorization $n*(y/n)$, both of which must be $\tau_n$-related to $0$, and so $y$ is not a $\tau_n$-atom. Notice that $n$ need not be prime for this to be true.
\end{proof}

Because of how nice these properties are, we will allow $n$ to be a usual prime for the remainder of the paper unless stated otherwise. Much insight can be gained into the $\tau_n$-atoms for composite values of $n$ based on this, however, given the following result:

\begin{theorem}\label{divisibleatom}If an integer $x$ is a $\tau_n$-atom, and $n|m$ for some integer $m\geq n$, then $x$ is a $\tau_m$-atom. 
\end{theorem}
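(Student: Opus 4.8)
The plan is to prove the contrapositive: rather than arguing directly that a $\tau_n$-atom is a $\tau_m$-atom, I would show that if $x$ fails to be a $\tau_m$-atom then it also fails to be a $\tau_n$-atom. So I would begin by assuming $x$ is not a $\tau_m$-atom, which by definition supplies a proper $\tau_m$-factorization $x = \lambda a_1 a_2 \cdots a_k$ with $k > 1$, $\lambda \in U(\Z)$, and $a_i \tau_m a_j$ for all $i, j$.

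The key step is the observation that the relation $\tau_m$ refines $\tau_n$ whenever $n \mid m$. Indeed, $a_i \tau_m a_j$ means $m \mid (a_i - a_j)$, and since $n \mid m$ by hypothesis, transitivity of divisibility gives $n \mid (a_i - a_j)$, that is, $a_i \tau_n a_j$. This holds for every pair $i, j$, while the unit $\lambda$ and the number of factors $k$ are left untouched. Hence the identical grouping $x = \lambda a_1 a_2 \cdots a_k$ is already a $\tau_n$-factorization, and because $k > 1$ it is proper. Therefore $x$ is not a $\tau_n$-atom, which establishes the contrapositive and hence the theorem.

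I do not expect a substantial obstacle here: once one recognizes that congruence modulo $m$ forces congruence modulo $n$, the factorization can simply be reused verbatim. The only points worth a sentence of care are that properness is genuinely inherited — the factor count $k > 1$ survives precisely because no regrouping is performed — and that the hypotheses $n \mid m$ and $m \geq n$ serve only to guarantee that $m$ is a positive multiple of $n$, with the case $m = n$ being trivial.
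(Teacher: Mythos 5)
Your proof is correct and is essentially the paper's own argument read in the contrapositive direction: both rest on the single observation that $n \mid m$ forces congruence modulo $m$ to imply congruence modulo $n$, so a proper $\tau_m$-factorization is automatically a proper $\tau_n$-factorization (the paper states the equivalent fact that a pair with $n \nmid a_i - a_j$ also has $m \nmid a_i - a_j$). Nothing further is needed.
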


\begin{proof}
Let $n, m$ be positive integers such that $n|m$ and let $x$ be a $\tau_n$-atom. Then in any factorization $x=\pm a_1a_2...a_k$, there must be some factors $a_i$ and $a_j$ such that $a_i\not\tau_n a_j$; that is, $n\nmid a_i-a_j$. Then since $n|m$, $m\nmid a_i-a_j$, and so $a_i \not \tau_m a_j$. Thus there must be no $\tau_m$-factorization of $x$, and so $x$ is a $\tau_m$-atom.
\end{proof}

This actually allows us to find all the $\tau_4$- and $\tau_6$-atoms. 

\begin{theorem}\label{tau4atoms} An integer $x$ is a $\tau_4$-atom if, and only if, $x$ is a $\tau_2$-atom. 
\end{theorem}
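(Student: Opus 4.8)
The plan is to prove the biconditional by treating the two implications separately, with the forward implication essentially free and the reverse implication carrying all of the work. For the forward direction, suppose $x$ is a $\tau_2$-atom. Since $2 \mid 4$ and $4 \geq 2$, Theorem \ref{divisibleatom} immediately yields that $x$ is a $\tau_4$-atom. So the entire substance of the theorem lies in the converse.

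For the reverse direction I would argue the contrapositive: assuming $x$ is \emph{not} a $\tau_2$-atom, I will exhibit a proper $\tau_4$-factorization of $x$. The first step is to pin down exactly which integers fail to be $\tau_2$-atoms. Since every $\tau_2$-factorization has all of its factors of a single parity, a short argument (or the known classification of $\tau_2$-atoms) shows that $x$ is not a $\tau_2$-atom precisely when either (i) $x$ is an odd composite, or (ii) $4 \mid x$, that is, the multiplicity of $2$ in $x$ is at least $2$. I would then split into these two cases.

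In case (i), write the prime factorization $x = p_1 p_2 \cdots p_k$ with $k \geq 2$ and each $p_i$ odd, so each $p_i \equiv 1$ or $3 \pmod 4$. Here I would reuse the negation trick from Case 1 of Theorem \ref{tau3atoms}: negating each factor that is $\equiv 3 \pmod 4$ converts it to one $\equiv 1 \pmod 4$ (since $-3 \equiv 1$), and if an odd number of factors were negated I absorb the resulting sign into a unit $\lambda = -1$. This produces a proper $\tau_4$-factorization in which every factor is $\equiv 1 \pmod 4$, so $x$ is not a $\tau_4$-atom.

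Case (ii) is where the genuinely new idea is needed, and I expect it to be the main obstacle, since the $\tau_3$-style argument does not transfer: negation preserves residues modulo $4$ among even numbers, so it cannot be used to align the factors. Writing $x = 2^a t$ with $t$ odd and $a \geq 2$, the key observation is to group the factors as $x = 2 \cdot 2 \cdots 2 \cdot (2t)$, using $a-1$ copies of $2$ together with the single factor $2t$. Each copy of $2$ is $\equiv 2 \pmod 4$, and because $t$ is odd we also have $2t \equiv 2 \pmod 4$; hence all $a \geq 2$ factors are mutually $\tau_4$-related, giving a proper $\tau_4$-factorization. Combining the two cases shows that every non-$\tau_2$-atom is a non-$\tau_4$-atom, which establishes the converse and completes the proof.
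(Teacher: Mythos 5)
Your proposal is correct and follows essentially the same route as the paper: the forward direction via Theorem \ref{divisibleatom}, and for the converse the same two cases, namely the sign-flipping trick for odd composites and the grouping $2\cdot 2\cdots 2\cdot(2t)$ with all factors congruent to $2$ modulo $4$ when $4\mid x$. The only cosmetic differences are that you argue by contrapositive rather than contradiction and negate the factors congruent to $3$ rather than those congruent to $1$.
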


\begin{proof}
We know by Theorem \ref{divisibleatom} that a $\tau_2$-atom is a $\tau_4$-atom. Suppose, by way of contradiction, that an integer $x$ is a $\tau_4$-atom and not a $\tau_2$-atom. Recall that the $\tau_2$-atoms are the usual primes and integers in which the multiplicity of $2$ is exactly one. Then there are two cases.

\emph{Case 1:} $2$ divides $x$ more than once; denote this by $x=2^{j+1}p_1p_2...p_k$, where $p_i$ is an odd prime for all $i$. Notice that $2$ multiplied by any odd number returns an even value not divisible by $4$; that is, an integer $\tau_4$-related to $2$. Thus, we may simply write $x$ as $2*2*...*(2p_1p_2...p_k)$, where there are $j$ factors of $2$ before the final factor. Then all these factors are $\tau_4$-related to $2$, and thus we have a $\tau_4$-factorization for $x$, which produces a contradiction. 

\emph{Case 2:} $2$ does not divide $x$. Then all prime factors of $x$ must be odd, and so must all be $\tau_4$-related to either $1$ or $3$. Notice that $-1\tau_4 3$; thus, as in Case 1 of Theorem \ref{tau3atoms}, we can simply negate all the factors of $x$ that are $\tau_4$-related to $1$ and, if necessary, include a factor of $-1$ to produce a $\tau_4$-factorization of $x$, producing a contradiction.

Therefore any $\tau_4$-atom must also be a $\tau_2$-atom.
\end{proof}

\begin{theorem}\label{tau6atoms} An integer $x$ is a $\tau_6$-atom if, and only if, $x$ is either a $\tau_2$-atom or a $\tau_3$-atom.
\end{theorem}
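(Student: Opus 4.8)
The plan is to prove the biconditional in two directions. The ``if'' direction is immediate: since $2\mid 6$ and $3\mid 6$, Theorem \ref{divisibleatom} (applied with $n=2,m=6$ and again with $n=3,m=6$) shows that every $\tau_2$-atom and every $\tau_3$-atom is a $\tau_6$-atom. All the substance is in the converse, which I would prove by contraposition: assuming $x$ is \emph{neither} a $\tau_2$-atom \emph{nor} a $\tau_3$-atom, I would exhibit a proper $\tau_6$-factorization of $x$. The guiding principle throughout is that a factorization is a $\tau_6$-factorization exactly when its factors agree simultaneously modulo $2$ and modulo $3$; so in each case the goal is to produce factors aligned in both moduli at once.

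First I would translate the two hypotheses into divisibility data. By the Anderson--Frazier characterization of the $\tau_2$-atoms and by Theorem \ref{tau3atoms}, ``$x$ is not a $\tau_2$-atom'' forces $x$ to be composite with the multiplicity $a$ of $2$ equal to $0$ or at least $2$, while ``$x$ is not a $\tau_3$-atom'' forces the multiplicity $b$ of $3$ to be $0$ or at least $2$. This splits the argument into four cases according to the pair $(a,b)$.

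Then I would dispatch the four cases. When $a=b=0$, $x$ is composite and coprime to $6$, so every prime factor is $\equiv\pm1\pmod 6$; negating each factor $\equiv 1$ (absorbing the sign into the unit) makes every factor $\equiv 5\pmod 6$. When $a\geq 2$ and $b\geq 2$, we have $36\mid x$, so $x=6\cdot(x/6)$ with both factors divisible by $6$, exactly as in Case 2 of Theorem \ref{tau3atoms}. The two mixed cases are where the asymmetry appears. If $a=0$ and $b\geq 2$, then $x$ is odd and, not being a $\tau_3$-atom, has a proper $\tau_3$-factorization; its factors are all odd (their product is odd) and congruent modulo $3$, hence congruent modulo $6$, so this factorization is \emph{already} a $\tau_6$-factorization with no adjustment. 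If instead $a\geq 2$ and $b=0$, then $x$ is even and coprime to $3$; taking a proper $\tau_2$-factorization, every factor is even and coprime to $3$, hence $\equiv 2$ or $\equiv 4\pmod 6$, and negating those $\equiv 4$ aligns them all to $\equiv 2\pmod 6$.

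The main obstacle, I expect, is this last pair of cases rather than the unit bookkeeping: the hypotheses ``not a $\tau_2$-atom'' and ``not a $\tau_3$-atom'' each control only one of the two moduli, and a factorization witnessing one says nothing about the other. The crux is therefore to see why, once $a$ and $b$ are pinned down, a single factorization can be coaxed into agreement in both moduli at once -- and in particular to notice the asymmetry that the odd case needs no repair while the even case requires the sign flip to reconcile the residues modulo $3$. I would close by checking that the four cases are exhaustive and that each produces a proper $\tau_6$-factorization, contradicting the assumption that $x$ is a $\tau_6$-atom.
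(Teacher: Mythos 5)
Your proposal is correct, and its skeleton -- the easy direction via Theorem \ref{divisibleatom}, then a four-way case split on the multiplicities of $2$ and $3$ -- is exactly the paper's. The cases $a=b=0$ (negate the prime factors congruent to $1$ so everything lands in $5 \bmod 6$) and $a,b\geq 2$ (split off a factor of $6$) match the paper verbatim. Where you genuinely diverge is in the two mixed cases. The paper rebuilds a $\tau_6$-factorization from the usual prime factorization: it collects the odd prime factors by residue class mod $6$ into products $d_1$ and $d_5$ and exhibits $x=2*2*\dots*(2d_1d_5)$ explicitly (and says Case 3 is ``similar''). You instead use the non-atom hypothesis directly: an existing proper $\tau_3$-factorization of an odd $x$ is \emph{already} a $\tau_6$-factorization, since factors that are all odd and congruent mod $3$ are congruent mod $6$; and a proper $\tau_2$-factorization of an even $x$ coprime to $3$ has all factors in $\{2,4\}$ mod $6$, which a sign flip aligns. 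Your route is shorter and makes the CRT mechanism (agreement mod $2$ and mod $3$ simultaneously) explicit, at the small cost of invoking the characterizations of $\tau_2$- and $\tau_3$-atoms to justify the case split on $(a,b)$ in the first place -- which the paper also does implicitly. Both arguments are sound; yours localizes the work to the hypothesis rather than to the prime factorization.
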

\begin{proof}
Again, by Theorem \ref{divisibleatom} we know that the $\tau_2$- and $\tau_3$-atoms must be $\tau_6$-atoms. Suppose, by way of contradiction, that an integer $x$ is a $\tau_6$-atom that is neither a $\tau_2$-atom nor a $\tau_3$-atom; that is, the multiplicity of neither $2$ nor $3$ in $x$ is exactly 1, and $x$ is not a usual prime. The following 4 cases exhaust all possibilities, then:

\emph{Case 1:} The multiplicities of both $2$ and $3$ in $x$ is zero; that is, $x=p_1p_2...p_k$ where $p_i$ is a usual prime not equal to $2$ or $3$ for all $i$. Consider some arbitrary usual prime $p_j$ which divides $x$. Clearly $p_j\not\tau_6 0$, else $6|p_j$ and so $p_j$ is not a usual prime. Nor can $p_j\tau_6 2$ be true, else $p_j=6q+2$ for some integer $q$, and so $2|p_j$ and, since $p_j\neq 2$, $p_j$ is not a usual prime. Similarly, $p_j\not\tau_6 3$ and $p_j\not\tau_6 4$. Thus, $p_j\tau_6 1$ or $p_j\tau_6 5$, and since $p_j$ is arbitrary, this holds for all $j$. Notice, however, that $-1\tau_6 5$, and so, similar to Case 1 of Theorem \ref{tau3atoms}, we may simply negate all usual prime factors of $x$ and, if necessary, introduce a factor of $-1$ to produce a $\tau_6$-factorization of $x$. Thus, either $2$ or $3$ must have multiplicity at least $1$ in $x$, and since their multiplicity cannot be exactly one by hypothesis, it must be greater than one.

\emph{Case 2:} The multiplicity of $2$ in $x$ is greater than $1$, and the multiplicity of $3$ in $x$ is zero; that is, $x=2^yp_1p_2...p_k$ where $p_i$ is a usual prime not equal to $2$ or $3$ for all $i$ and $y>1$. The product of any number of the $p_i$ factors cannot be $\tau_6$-related to $3$, else that product would be equal to $6q+3$ for some integer $q$, and thus would necessarily be divisible by $3$; but $3\not |x$. Further, by the same logic as Case 1, none of the $p_i$ factors can be $\tau_6$-related to either $2, 4$, or $6$, else they must not be usual primes. Thus, they must all be $\tau_6$-related to either $1$ or $5$. Notice that the product of all the $p_i$ factors $\tau_6$-related to $1$ is still $\tau_6$-related to $1$; thus, take their product and call this product $d_1$. Now, since $5\tau_6 -1$, then the product of all the $d_i$ factors $\tau_6$-related to $5$ must be $\tau_6$-related to either $1$ or $-1$; take this product and call it $x_5$. Then $x=2^yx_1x_5$. If $x_5\tau_6 1$, then notice that $x=2*2*...*(2d_1d_5)$ is a $\tau_6$-factorization, since $d_1d_5\tau_6 1$, and so $2d_1d_5\tau_6 2$. If $d_5\tau_6 -1$, then, similarly, $x=2*2*...*(-2d_1d_5)*-1$ is a $\tau_6$-factorization, since $-1$ is a unit.

\emph{Case 3:} The multiplicity of $3$ in $x$ is greater than $1$, and the multiplicity of $2$ in $x$ is zero; that is, $x=3^yp_1p_2...p_k$ where $p_i$ is a usual prime not equal to $2$ or $3$ for all $i$ and $y>1$. The proof of this case is similar to that of Case 2.

\emph{Case 4:} The multiplicities of both $2$ and $3$ in $x$ are greater than $1$. Then $x=2^y3^zp_1p_2...p_k$, where $p_i$ is a usual prime not equal to $2$ or $3$ for all $i$ and $y, z>1$. Then notice that $x=6*(2^{y-1}3^{z-1}p_1p_2...p_k)$ is a $\tau_6$-factorization of $x$.

Thus, $x$ must be either a $\tau_2$-atom or $\tau_3$-atom.
\end{proof}

\end{section}
\begin{section}{The $\mu_n$ relation}

\quad For higher values of $n$, determining the $\tau_n$-atoms becomes significantly more difficult, in part due to the issue of negative factors. For example, $6$ may seem to be a $\tau_5$-atom, since $6=2*3$ and $2\not\tau_5 3$, but notice that $6=-1*-2*3$, and since $-1$ is a unit and $-2\tau_5 3$, then this is a $\tau_5$-factorization of $6$. This problem becomes significantly more difficult to overcome when considering integers with very many factors. We will solve this problem by introducing the $\mu_n$ relation.

Following the observation that $-k\tau_n (n-k)$, we write the following definition:
\begin{definition} For two integers $x$ and $y$, $x$ is \emph{$\mu_n$-related} to $y$, denoted $x\mu_n y$, if $x\tau_n\pm y$.
\end{definition}
In this way, we can simply worry about whether a factorization exists in which all terms are $\mu_n$-related, eliminating the need to consider the unit $-1$. 

Since the $\mu_n$ relation is based on the $\tau_n$ relation, which we know is an equivalence relation, it is of interest whether $\mu_n$ is an equivalence relation as well, and it should be rather clear to the reader that it is. As a result, we can consider the equivalence classes of the integers under the $\mu_n$ relation, which we will denote (at this time) using the familiar notation $[x]=\{y\in\Z|y\mu_n x\}$. Notice that, as there are $n$ equivalence classes of the $\tau_n$ relation, there must be $\lceil n/2\rceil$ equivalence classes of the $\mu_n$ relation, and since we are only concerned with when $n$ is a usual prime, $\lceil n/2\rceil=\dfrac{n+1}{2}$. 

\begin{theorem}\label{mureps} For a usual prime $n>2$, and for any positive integer $a$ such that $1<a<n, \left\{[0], [a], [a^2], ... [a^{\frac{n-1}{2}}]\right\}$ is the set of all equivalence classes of the integers under the $\mu_n$ relation. Further, $a^{\frac{n-1}{2}}\mu_n 1$.
\end{theorem}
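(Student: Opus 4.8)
The plan is to prove the two assertions separately, treating the ``Further'' clause first, since it is the cleaner of the two and its conclusion feeds directly into the main count. Because $n$ is a usual prime and $1<a<n$, we have $\gcd(a,n)=1$, so Fermat's Little Theorem gives $a^{n-1}\equiv 1\pmod n$. Factoring yields $\left(a^{(n-1)/2}-1\right)\left(a^{(n-1)/2}+1\right)\equiv 0\pmod n$, and since $n$ is prime it must divide one of the two factors; hence $a^{(n-1)/2}\equiv \pm 1\pmod n$, which is precisely the statement $a^{(n-1)/2}\,\mu_n\,1$. This computation simultaneously identifies the last listed class, giving $[a^{(n-1)/2}]=[1]$.

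For the main assertion I would argue by counting. The discussion preceding the theorem established that, for an odd prime $n$, the $\mu_n$ relation has exactly $\frac{n+1}{2}$ equivalence classes. The listed collection $\left\{[0],[a],[a^2],\ldots,[a^{(n-1)/2}]\right\}$ consists of the class $[0]$ together with the $\frac{n-1}{2}$ classes arising from the powers $a^1,\ldots,a^{(n-1)/2}$, for a total of $\frac{n+1}{2}$ entries. Consequently, once I verify that these $\frac{n+1}{2}$ listed classes are pairwise distinct, a pigeonhole argument forces them to account for every class, which is exactly the claim. The class $[0]$ is immediately separate from each $[a^k]$: since $n$ is prime and $\gcd(a,n)=1$ we have $n\nmid a^k$, so $a^k\not\equiv 0\pmod n$ and thus $[a^k]\neq[0]$.

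It then remains to establish that $[a^i]\neq[a^j]$ whenever $1\le i<j\le \frac{n-1}{2}$. Unwinding the definition of $\mu_n$, the equality $[a^i]=[a^j]$ means $a^i\equiv\pm a^j\pmod n$; cancelling the unit $a^i$ (valid because $\gcd(a,n)=1$) this reduces to $a^{j-i}\equiv\pm 1\pmod n$ with $0<j-i<\frac{n-1}{2}$. Thus the whole problem collapses to showing that no exponent $k$ in the range $0<k<\frac{n-1}{2}$ satisfies $a^k\equiv\pm 1\pmod n$. I would approach this through the multiplicative order of $a$ in $(\Z/n\Z)^{\times}$: letting $d$ denote that order and noting that the sequence of powers of $a$ is periodic with period $d$, I would locate the first exponent at which a power returns to $\pm 1$ and compare it against the cutoff $\frac{n-1}{2}$. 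This distinctness step is where I expect the genuine difficulty to lie, since it is the only point at which the fine arithmetic of $a$ modulo $n$ enters the argument; the Fermat computation and the counting are routine, but confirming that the first $\frac{n-1}{2}$ powers of $a$ really do land in separate $\mu_n$-classes is the crux and the part I would develop most carefully.
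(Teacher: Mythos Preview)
Your plan mirrors the paper's: establish $a^{(n-1)/2}\equiv\pm1\pmod n$, observe that the list has the right cardinality $\frac{n+1}{2}$, separate $[0]$ from the powers, and then reduce everything to the claim that no exponent $k$ with $0<k<\frac{n-1}{2}$ satisfies $a^{k}\equiv\pm1\pmod n$. You are right to single this last step out as the crux, and right not to pretend it is routine---in fact it cannot be completed, because the theorem as stated is false. For any odd prime $n>3$ the choice $a=n-1$ satisfies $1<a<n$, yet $a\equiv-1\pmod n$, so already $[a]=[1]$ and the list $\{[0],[a],[a^{2}],\ldots,[a^{(n-1)/2}]\}$ collapses to $\{[0],[1]\}$. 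Concretely, $n=7$, $a=6$ yields two $\mu_{7}$-classes rather than four.

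The paper's own argument founders at exactly the point you isolate. It takes the least positive exponent $s-r$ with $a^{s-r}\equiv\pm1$, writes $\frac{n-1}{2}=(s-r)q+k$ with $k<s-r$, deduces $a^{k}\equiv\pm1$, and declares a contradiction with minimality; but when $(s-r)\mid\frac{n-1}{2}$ the remainder is $k=0$, the relation $a^{0}=1\equiv\pm1$ is vacuous, and no contradiction arises. The intended statement becomes true under the extra hypothesis that the image of $a$ generates the cyclic group $(\Z/n\Z)^{\times}/\{\pm1\}$---for instance whenever $a$ is a primitive root modulo $n$---and under that hypothesis your outline goes through verbatim. Since the paper only ever uses this theorem to fix one convenient $a$ per prime $n$ in order to index the nonzero $\mu_{n}$-classes, choosing $a$ to be a primitive root repairs all of the downstream applications.
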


\begin{proof}
Since $n$ is a usual prime, it is relatively prime to any $a<n$. Thus, by Fermat's Little Theorem, we can say that, for such an $a<n$, $\left\{[0], [a], [2a], ... [\frac{n-1}{2} a]\right\}$ is a set of $\frac{n+1}{2}$ distinct equivalence classes of the integers under the $\tau_n$-relation. 
We claim that these are also distinct equivalence classes of the integers under the $\mu_n$-relation. Suppose not: that is, suppose $sa \mu_n ra$ for some nonnegative integers $r, s\leq\frac{n-1}{2}.$ Then $sa\tau_n\pm ra$, and so $n|(s\pm r)a$. Since $0<a<n$, then $n\nmid a$; thus $n|s\pm r$. Notice that $|s\pm r|\leq n-1$, and thus $n|s\pm r$ if, and only if, $s\pm r=0$. Then $s=\pm r$, and since $s$ and $r$ are nonnegative then $s=r$. Thus, the aforementioned equivalence classes are indeed distinct in the integers under the $\mu_n$ relation, and further, since there are $\frac{n+1}{2}$ such classes, these must represent \emph{all} of the equivalence classes of the integers under the $\mu_n$ relation.

Notice that the set $S=\left\{1, 2, ..., \frac{n-1}{2}\right\}$ has a representative from each equivalence class except $[0]$. Then the product of all the elements of $S$ must be $\mu_n$-related to the product $\left\{b, 2b, ..., \frac{n-1}{2}b\right\}$ for any positive integer $b< n$; that is, $\left(\frac{n-1}{2}\right)!\tau_n\pm\left(\frac{n-1}{2}\right)!*b^{\frac{n-1}{2}}$. Since $\frac{n-1}{2}!$ is relatively prime to $n$, then it must have an inverse modulo $n$. Thus, $\pm1\tau_n b^{\frac{n-1}{2}}$. Since $b$ was arbitrary, this holds for any positive integer less than $n$.

Consider the set $\left\{[0], [a], [a^2], ... [a^{\frac{n-1}{2}}]\right\}$ for some arbitrary positive integer $a<n$. It will be shown that this, too, is the set of all distinct equivalence classes of the integers under the $\mu_n$ relation. Clearly none of the classes represented by the powers of $a$ must be equivalent to that represented by $[0]$, since $n$ is a usual prime. Suppose that $a^s\mu_n a^r$ where $r<s\leq\frac{n-1}{2}$. Then $a^{s-r}\mu_n 1$, so $a^{s-r}\tau_n \pm 1$. Suppose that $(s-r)$ is the smallest positive integer such that $a^{s-r}\tau_n\pm 1$. By the Quotient Remainder Theorem, $\frac{n-1}{2}=(s-r)q+k$ for some positive integers $q$ and $k$, with $k<(s-r)$. Then since, as stated in the previous paragraph, $a^{\frac{n-1}{2}}\tau_n\pm 1$, then consider that $\pm 1\tau_n a^{\frac{n-1}{2}}\tau_n a^{(s-r)q+k}=a^{(s-r)q}a^k\tau_n (a^{(s-r)})^qa^k\tau_n (\pm 1)^q a^k\tau_n \pm a^k$; in short, $\pm 1\tau_n a^k$. But $k<(s-r)$, a contradiction. Thus, $sa\not\mu_n ra$, and so the equivalence classes must be distinct. 
\end{proof}

For any usual prime $n$, notice that if an integer $x$ has usual prime factorization $x=p_1p_2...p_k$, and $p_i\mu_n 0$ for any $i$, then we already know by Theorem \ref{onefactoratom} whether $x$ is a $\tau_n$-atom: if only one factor is in the $\mu_n$ equivalence class $[0]$ then it is a $\tau_n$-atom, and if more than one is, it is not. Thus, for the remainder of the paper we will only be concerned with integers whose usual prime factorizations contain no factors $\mu_n$-related to $0$. 

At this point, we will introduce a new notation to further simplify the interpretation of usual prime factorizations of integers in the context of $\tau_n$-factorizations by indexing the equivalence classes of the $\mu_n$ relation. First, let $y$ and $z$ be arbitrary integers with $z\mu_n 1$. Clearly $y\in [y]$ and $z\in[z]$, but consider $yz$. Notice that, since $z\mu_n 1$, then $yz\mu_n y$. In essence, when we multiply integers by elements of $[1]$, we do not change equivalence classes. We shall denote an arbitrary element of $[1]$ by $x_0$. This notation will become clearer in time.

Now we know by Theorem \ref{mureps} that for any integer $a$ such that $1<a<n$ where $n$ is a usual prime, $a^{\frac{n-1}{2}}\mu_n \pm 1$. We will denote elements of the equivalence class $[a]$ by $x_1$, of $[a^2]$ by $x_2$, and so on. This indexing is not unique to each usual prime $n$, but relies only on a choice of the integer $a$ for that value of $n$. Notice that, for a given usual prime $n$, these indexing values will range from $0$ to $\frac{n-3}{2}$.

We shall now demonstrate the helpful nature of this indexing system we have built up. Suppose that we are curious about whether some integer $x$ is a $\tau_n$-atom for some large usual prime $n$, and $x$ has a usual prime factorization $x=x_1x_2x_3$; that is, for some integer $a\in(1, n),$ the usual prime factorization of $x$ has one factor which is $\mu_n$-related to $a$, one which is $\mu_n$-related to $a^2$, and one $\mu_n$-related to $a^3$. Now, clearly, $x_1x_2x_3$ is not a $\tau_n$-factorization of $x$. However, consider the product $x_1x_2$. Since $x_1\mu_n a$ and $x_2\mu_n a^2$, then $x_1x_2\mu_n a^2a=a^3$. But $x_3\mu_n a^3$, so $x_1x_2\mu_n x_3$. Then $x_1x_2\tau_n\pm x_3$, and so $x$ has a $\tau_n$-factorization; namely, either $x=(x_1x_2)*x_3$ or $x=-1*(x_1x_2)*x_3$. Notice, though, that $x_1x_2\mu_n x_3$, and (looking at the indices) $1+2=3$. This is done to create an easier, addition-based indexing which allows us to combine terms in an easy and predictable manner. In general, in fact, $x_ix_j\mu_n x_{i+j \emph{ mod } \frac{n-1}{2}}$, which should be clear since $x_ix_j\mu_n a^ia^j=a^{i+j}\mu_n x_{i+j}$; the "mod" clause is simply inserted to ensure that we use the smallest representative, as $a^{\frac{n-1}{2}}\mu_n 1$. We shall approach multiplication of these representatives by considering their indices under addition mod $\frac{n-1}{2}$. We will adopt one last convention of notation for the remainder of the paper: the factorization of $x$ mentioned earlier, $x=(x_1x_2)*x_3$, will be written as $x=x_3*x_3$. This is not intended to imply that there is a single factor with a multiplicity of $2$ in $x$, but instead to merely state that there are $2$ factors in $x$ which are both in the $\mu_n$ equivalence class $[a^3]$ for some integer $a\in(1,n)$. Henceforth, it should not be assumed that all integers denoted $x_i$ are equal for a particular integer $i$, but instead that both are merely within the same $\mu_n$ equivalence class.

We shall use the results of this section to investigate the $\tau_n$-atoms for higher values of $n$. The next section is appropriately shorter than the previous sections, as this new notation streamlines the process of finding $\tau_n$-atoms greatly.
\end{section}

\begin{section}{The $\tau_5$-atoms}

\quad The next usual prime after $3$ is $5$, and so we will naturally move on at this point to investigate the $\tau_5$-atoms. By Theorem \ref{onefactoratom}, we know that integers of the form $5p_1p_2...p_k$, where $p_i$ is a usual prime not equal to $5$, are $\tau_5$-atoms, along with the usual primes. However, given the example at the beginning of the previous section, $6$ is also a $\tau_5$-atom, yet meets neither of these criteria, and so we aim to characterize the remaining $\tau_5$-atoms. By Theorem \ref{onefactoratom}, we can ignore any other integers which are divisible by $5$; thus, for the remainder of the section we only consider those integers not divisible by $5$ unless otherwise stated. Notice that there are only two $\mu_5$ equivalence classes other than $[0]$: $[1]$ and $[2]$; thus, we will denote elements of $[1]$ by $x_0$ and elements of $[2]$ by $x_1$.

Clearly any usual prime factorization of an integer with no $x_1$ factors must contain only $x_0$ elements, and so such an integer is not a $\tau_5$-atom. Similarly, we need not consider those integers whose usual prime factorizations have no $x_0$ factors. There must be some $x_0$ and some $x_1$ factors. 

\begin{theorem}\label{tau5atoms}The $\tau_5$-atoms are the usual primes, integers whose usual prime factorizations are of the form $5p_1p_2...p_k$ where $p_i$ is a usual prime not equal to $5$, and integers whose usual prime factorizations are of the form $x_0*x_0*...*x_0*x_1$, where each $x_0$ is a usual prime in the $\mu_n$ equivalence class $[1]$ and $x_1$ is a usual prime in the $\mu_n$ equivalence class $[2]$.
\end{theorem}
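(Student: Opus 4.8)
The plan is to dispatch the three listed forms by first peeling off the cases already settled and then reducing everything else to a parity count in the index notation of Section 4. By Theorem \ref{primeatom} the usual primes are $\tau_5$-atoms, and by Theorem \ref{onefactoratom} the integers $5p_1\cdots p_k$ with each $p_i\neq 5$ are precisely the multiples of $5$ that are $\tau_5$-atoms (any integer in which $5$ has multiplicity at least $2$ is not an atom). So the whole question reduces to composite integers $x$ with more than one usual prime factor and $5\nmid x$; for these every prime factor lies in $[1]$ or $[2]$. Up to the unit $-1$ I may take $x>0$, and I would write $x$ with $s$ prime factors of index $0$ (the $x_0$'s, lying in $[1]$) and $t$ prime factors of index $1$ (the $x_1$'s, lying in $[2]$), with $s+t>1$. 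The goal then becomes to show that $x$ is a $\tau_5$-atom if and only if $s\geq 1$ and $t=1$.

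The central reduction I would establish first is a translation of ``$\tau_5$-factorization'' into the index bookkeeping: a proper $\tau_5$-factorization of $x$ exists if and only if the multiset of usual prime factors of $x$ can be partitioned into $k\geq 2$ nonempty groups whose products all lie in a single $\mu_5$ equivalence class. The forward direction is immediate, since in any $\tau_5$-factorization $x=\lambda a_1\cdots a_k$ the absolute values $b_i=|a_i|$ are positive divisors multiplying to $|x|$, hence (by unique prime factorization) a grouping of the primes, and each $b_i$ lies in the $\mu_5$ class of the common residue $c$ of the $a_i$. For the reverse direction, given a grouping with all group-products $b_i$ in one nonzero class $[c]$, I would choose a sign $\epsilon_i=\pm1$ for each group so that $\epsilon_i b_i\tau_5 c$ (possible since $c\not\equiv -c \pmod 5$ for $c\neq 0$) and absorb $\prod\epsilon_i$ into the unit $\lambda$; this is exactly the purpose for which $\mu_5$ was introduced. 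Using the index arithmetic $x_ix_j\mu_n x_{i+j \bmod 2}$ from Section 4, the $\mu_5$ class of any group is governed by the parity of the number of $x_1$ factors it contains, so ``all group-products in one class'' becomes ``all groups have an $x_1$-count of the same parity.''

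With this in hand the analysis is a short parity argument. If $t=0$ (or $s=0$) all prime factors already lie in one class, so the finest grouping into single primes is a $\tau_5$-factorization and $x$ is not an atom. If $s,t\geq 1$ with $t\geq 2$, I would exhibit a factorization by placing each $x_1$ factor in its own group and sweeping all $x_0$ factors into one of those groups: this yields $t\geq 2$ groups each of odd $x_1$-count, hence all in the class $[2]$, so $x$ is not an atom. Finally, if $s\geq 1$ and $t=1$, any grouping into $k\geq 2$ pieces must place the unique $x_1$ factor in exactly one group (index $1$, class $[2]$) while every other group consists solely of $x_0$ factors (index $0$, class $[1]$); the two parities cannot be reconciled, so no $\tau_5$-factorization exists and $x$ is a $\tau_5$-atom of the form $x_0*\cdots*x_0*x_1$. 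Collecting the cases yields the stated characterization.

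I expect the main obstacle to be stating and justifying the reduction cleanly rather than the parity count, which is essentially forced once the reduction is in place. The delicate points are keeping track of the signs $\epsilon_i$ and the unit $\lambda$ so that ``same $\mu_5$ class'' genuinely certifies a $\tau_5$-factorization, and confirming that the positive divisors arising from an arbitrary $\tau_5$-factorization really do correspond to a partition of the prime multiset (this is where unique factorization is used). Everything else then follows from the single fact that $x_1x_1\mu_5 x_0$, which makes the parity of $t$ the decisive quantity.
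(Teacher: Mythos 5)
Your proof is correct and follows essentially the same route as the paper's: reduce to composite integers coprime to $5$, then use the index arithmetic $x_1 x_1\,\mu_5\, x_0$ to show that such an integer is an atom exactly when it has precisely one prime factor in $[2]$. The only difference is that you explicitly state and justify the reduction from $\tau_5$-factorizations to partitions of the prime multiset with all group-products in a single $\mu_5$ class (handling the signs and the unit $\lambda$), a step the paper leaves implicit in its Section 4 discussion; this is added rigor rather than a different approach.
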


\begin{proof}
It suffices to show that integers with prime factorizations of the form $x_0*x_0*...*x_0*x_1$ are $\tau_5$ atoms, and that no other integers which are neither usual primes nor divisible by $5$ are $\tau_5$-atoms. Recall, again, that all factors denoted $x_0$ are \emph{not necessarily equal}; this notation merely communicates the $\mu_n$ equivalence class of each factor.

Let $x$ be an integer with usual prime factorization $x=x_0*x_0*...*x_0*x_1$. First, note that this is not a $\tau_5$-factorization. Further, since $0+0=0$, the product of any number of $x_0$ factors is simply another $x_0$ factor, and since $0+1=1$, then the product $x_0*x_1$ is another $x_1$ factor. Thus, there must always be exactly one $x_1$ factor in any factorization of $x$, and so $x$ must not have a $\tau_5$-factorization, as there must either be only $x_0$ factors or multiple $x_1$ factors in a $\tau_5$-factorization.. Thus, $x$ is a $\tau_5$-atom.

To show that there are no overlooked $\tau_5$-atoms, let $y$ be an integer with usual prime factorization $y=x_0*x_0*...*x_0*x_1*x_1*...x_1$; that is, any integer which is neither a usual prime nor divisible by $5$, and which has more than one $x_1$ factor. Then since $0+0=0$, we may simply multiply all of the $x_0$ factors together into a single $x_0$ factor; that is, $y=x_0*x_1*x_1*...*x_1$. Then, since $0+1=1$, multiplying the $x_0$ factor by a single $x_1$ factor produces another $x_1$ factor, and so $y=(x_0*x_1)*x_1*...*x_1=x_1*x_1*...*x_1$. Thus, $y$ has a $\tau_n$-factorization, since it has a factorization in which all factors share a $\mu_n$ equivalence class. 
\end{proof}

This exhausts all possibilities. A $\tau_5$-atom that is neither a usual prime nor divisible by $5$ must have at least one $x_1$ factor, but it cannot have more than one; thus it must have exactly one. It must have at least one $x_0$ factor (else it is a usual prime, a contradiction), however it may have any number of $x_0$ factors in addition to the obligatory $x_1$ factor. 
\end{section}

\begin{section}{The $\tau_7$-atoms}
\quad Next, we move on to the $\tau_7$-atoms. Again, we know by Theorem \ref{onefactoratom} that integers of the form $7p_1p_2...p_k$ where $p_i$ is a usual prime not equal to $7$ are $\tau_7$-atoms, along with the usual primes. Notice that there are $3$ nonzero equivalence classes of the $\mu_7$ relation; thus, we shall be calling their representatives $x_0, x_1,$ and $x_2$, similar to the way we denoted the $\mu_5$ equivalence classes. 

Recall that by Theorem \ref{tau5atoms}, integers whose usual prime factorizations are of the form \\$x_0*x_0*...*x_0*x_1$ are $\tau_5$-atoms (where $x_0$ and $x_1$ refer to $\mu_5$ equivalence classes). When we consider this kind of integer in reference to $\mu_7$ equivalence classes, we have two possibilities: $x_0*x_0*...x_0*x_1$ or $x_0*x_0*...x_0*x_2$. It just so happens that integers with usual prime factorizations of either of these forms are $\tau_7$-atoms; the proof of Theorem \ref{tau5atoms} suffices to show this point. 

However, there are other $\tau_7$-atoms that meet none of these criteria. Consider that $6$ meets none of these criteria, yet is clearly a $\tau_7$-atom, as $2\not\mu_7 3$. We wish to characterize the remaining $\tau_7$-atoms. 

\begin{theorem}\label{tau7atoms}The $\tau_7$-atoms are the usual primes, along with integers whose usual prime factorizations are of the form $7p_1p_2...p_k$ where $p_i$ is a usual prime not equal to $7$, or integers whose usual prime factorizations can be expressed as follows:

\begin{itemize*}
\item $x_0*x_0*...*x_0*x_1$\\
\item $x_0*x_0*...*x_0*x_2$\\
\item $x_1*x_2$
\end{itemize*}

where $x_0, x_1$, and $x_2$ are factors from the $\mu_7$ equivalence classes $[1], [2]$, and $[3]$, respectively.
\end{theorem}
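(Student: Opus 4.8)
The plan is to follow the template of the proof of Theorem~\ref{tau5atoms}, reducing the question to a combinatorial statement about the $\mu_7$-indices of the prime factors. By Theorem~\ref{onefactoratom} the usual primes and the multiplicity-one-in-$7$ integers are already settled, so I restrict to an integer $x$ that is neither a usual prime nor divisible by $7$ and write its usual prime factorization with each factor labelled by its $\mu_7$-class, using $x_0,x_1,x_2$ for $[1],[2],[3]$ (so the indices $0,1,2$ add modulo $\tfrac{n-1}{2}=3$, with $x_1x_1\mu_7x_2$, $x_2x_2\mu_7x_1$, and $x_1x_2\mu_7x_0$). Since $U(\Z)=\{\pm1\}$, a proper $\tau_7$-factorization of $x$ exists precisely when its prime factors can be partitioned into two or more nonempty groups all lying in a single $\mu_7$-class: such a grouping becomes a genuine $\tau_7$-factorization by negating the group products congruent to $-c$ and inserting a compensating unit, exactly as in the earlier proofs. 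As the class of a group is the sum of the indices of its members mod $3$ (with the $x_0$ factors ``neutral,'' contributing nothing), the whole problem becomes: for which multisets of indices in $\{0,1,2\}$ is there no partition into $\ge2$ groups of equal index sum?

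First I would show the three listed forms admit no such partition. For $x_0*\cdots*x_0*x_1$ and $x_0*\cdots*x_0*x_2$ there is exactly one non-neutral factor; in any partition into $\ge2$ groups its group has nonzero index while every other group is neutral-only and hence has index $0$, so the groups cannot all share a class, and these are atoms. For $x_1*x_2$ the only partition into $\ge2$ groups is into the two singletons, of indices $1$ and $2$, which differ; so it is an atom as well. These steps are short and mirror the corresponding argument of Theorem~\ref{tau5atoms}.

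The substance is the converse: every remaining integer factors. Writing $(n_0,n_1,n_2)$ for the number of factors of each index and $N=n_1+n_2$, I would organize the argument into: the case $N=0$ (then $n_0\ge2$, and the neutral primes split into singletons, all of class $[1]$); the case in which all non-neutral factors share a single index with at least two of them present (place each in its own group and scatter the neutrals among these groups, giving $\ge2$ groups of one common class); the case $(n_1,n_2)=(1,1)$ with $n_0\ge1$ (group $x_1x_2$, of index $0$, together with the neutral singletons); and the genuinely mixed case $N\ge3$ with both indices present. For this last case, with $S$ the total index of $x$ and $c:=2S \bmod 3$, I would exhibit a single proper nonempty group $G$ of index $c$, whose complement then automatically has index $S-c\equiv c$, yielding two equal-class groups (neutrals absorbed freely). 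Using the index-negation symmetry $i\mapsto -i$ that interchanges the two non-neutral classes, I may assume $n_1\ge2$, and then take $G=\{x_1\}$ when $c=1$, $G=\{x_1,x_1\}$ when $c=2$, and $G=\{x_1,x_2\}$ when $c=0$ (the last available since both indices are present).

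The main obstacle I anticipate is precisely this mixed case $N\ge3$: one must check that the chosen $G$ is a \emph{proper} nonempty subset for every residue $c$ and every configuration. The delicate point is that if the ``all one index'' configurations were not peeled off first, the index-$c$ group could exhaust every non-neutral factor (e.g. three $x_1$'s with $c=0$), leaving an improper partition; separating those configurations and dispatching them by the singleton construction avoids this. The remaining work is the routine but careful bookkeeping of verifying that these four cases exhaust every $(n_0,n_1,n_2)$ lying outside the three atomic forms and that each construction genuinely produces at least two nonempty groups, which completes the proof.
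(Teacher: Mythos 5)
Your proposal is correct and rests on the same reduction the paper uses: label each usual prime factor by its $\mu_7$-index, note that a proper $\tau_7$-factorization is exactly a partition of the factors into at least two nonempty groups of equal index sum modulo $3$ (signs being repaired by the unit $-1$), check that the three listed forms admit no such partition, and then exhaust the remaining index multisets. Where you genuinely diverge is in the main ``mixed'' case. The paper (its Cases 1 and 3) argues by parity: if the number of $x_1$ factors (or of $x_2$ factors) is even, it pairs them to manufacture factors of the other class; if both counts are odd, it cancels $x_1$'s against $x_2$'s until only one class remains. Your complement trick --- exhibit one proper nonempty group $G$ of index $c=2S\bmod 3$, so that the complement automatically has index $S-c\equiv -S\equiv 2S\equiv c\pmod 3$ --- replaces that parity casework with a single two-block split, and your explicit choices of $G$ for $c=0,1,2$ (after normalizing $n_1\ge 2$ by the $i\mapsto -i$ symmetry) are proper precisely because you peeled off the all-one-index and $(n_1,n_2)=(1,1)$ configurations first; that is the right pitfall to flag, and your case list $(N=0$, one repeated index, $(1,1)$ with $n_0\ge 1$, mixed $N\ge 3)$ does exhaust everything outside the three atomic forms. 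The paper's pairing argument is more hands-on and mirrors its $\tau_5$ and $\tau_{11}$ treatments; your version is tighter and makes visible the one arithmetic fact being used (that $2$ is invertible modulo $\frac{n-1}{2}=3$), at the cost of the complement's equal-index property being a small computation rather than an explicit recombination. Both are complete proofs; yours is also somewhat more careful on the atom direction, where the paper simply defers to the proof of Theorem \ref{tau5atoms} for the first two forms.
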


\begin{proof}
All these cases have been addressed except integers with usual prime factorizations of the form $x_1*x_2$, yet it is evident that such an integer must not have a $\tau_7$-factorization, since both factors are already usual primes. Thus, such integers must be $\tau_7$-atoms. 

It must be shown that the criteria listed in the theorem exhaust all $\tau_7$-atoms. Thus we consider the following cases, which exhaust all possibilities:

\emph{Case 1}: Let $x$ be an integer with usual prime factorization of the form \\$x_0*x_0*...*x_0*x_1*x_1*...*x_1*x_2*x_2*...*x_2$, where there is at least one $x_0$ factor, at least one $x_1$ factor, and at least one $x_2$ factor. Notice that all $x_0$ factors can be multiplied together into a single $x_0$ factor, since $0+0=0$, and so we consider $x$ to be of the form $x_0*x_1*x_1...*x_1*x_2*x_2*...*x_2$ for the remainder of this case. 

Suppose that there are an even number of $x_1$ factors. Then since $1+1=2$, mutliply the $x_1$ factors together in pairs to make half as many $x_2$ factors. Then $x=x_0*x_2*x_2...*x_2$. Now, since $0+2=2$, then notice that $x=(x_0*x_2)*x_2*...*x_2=x_2*x_2...*x_2$, which means $x$ has a $\tau_7$-factorization; thus $x$ is not a $\tau_7$-atom.

Suppose that there are an even number of $x_2$ factors. Then since $2+2=1$ mod $3$  and $3=\frac{7-1}{2}$, then by Theorem \ref{mureps} the product $x_2*x_2$ must result in an $x_1$ factor. Thus, similar to the previous paragraph, we see that $x$ has a $\tau_7$-factorization of all $x_1$ factors, and so is not a $\tau_7$-atom.

Suppose that there are an odd number of both $x_1$ and $x_2$ factors. If there are the same number of $x_1$ and $x_2$ factors, then simply multiply each $x_1$ factor by an $x_2$ factor will yield an $x_0$ factor and so a $\tau_7$-factorization of $x$ exists. Suppose, instead, that there are more $x_1$ factors than $x_2$ factors. Then there must be at least two more $x_1$ factors than $x_2$ factors, since there are an odd number of both. Thus, following the same process of multiplying each $x_2$ factor by an $x_1$ factor until none remain, we shall see that $x=x_0*x_0*...*x_0*x_1*x_1...*x_1$. Since we can multiply all $x_0$ factors together and then multiply again by a single $x_1$ factor to yield an $x_1$ factor (as $0+0+...+0+1=1$), we see that $x$ has a $\tau_7$-factorization of all $x_1$ factors. If we assume that there are more $x_2$ factors than $x_1$ factors, this same process will provide a $\tau_7$-factorization of $x$ with only $x_2$ factors instead.

Thus, an integer with a usual prime factorization of the form in this case cannot be a $\tau_7$-atom.

\emph{Case 2}: Let $x$ be an integer with usual prime factorization of the form \\
$x_0*x_0*...*x_0*x_i*x_i*...*x_i$ for $i=1$ or $i=2$, where there are at least two $x_i$ factors. Then since $0+0+...+0+i=i$, we see that we may simply multiply all $x_0$ factors and one $x_i$ factor to yield another $x_i$ factor. Thus, $x=x_i*x_i*...*x_i$, and we see that $x$ has a $\tau_7$-factorization. Thus, an integer with a usual prime factorization of this form cannot be a $\tau_7$-atom.

\emph{Case 3}: Let $x$ be an integer with usual prime factorization of the form $x_1*x_1*...*x_1*x_2*x_2*...*x_2$, where there are at least two $x_1$ factors or at least two $x_2$ factors. If there are an even number of either $x_1$ or $x_2$ factors, then following the process of Case 1 when there are even numbers of $x_1$ or $x_2$ factors, we can simply multiply factors in pairs to produce factors of the other kind, and so can create a $\tau_7$-factorization of $x$. If there are an odd number of both $x_1$ and $x_2$ factors, then we can simply follow the process of Case 1 when there are odd numbers of $x_1$ and $x_2$ factors to create a $\tau_7$-factorization of $x$. Thus, integers with usual prime factorizations of this form must not be $\tau_7$-atoms.
\end{proof}

These cases exhaust all possibilities; thus, the cases outlined in the statement of the theorem must be the only ones which are $\tau_7$-atoms. Our goal of finding patterns or generalizations of $\tau_n$-atoms is being developed, and is looking more likely, as we see that forms of usual prime factorizations that are the same as those of the $\tau_n$-atoms for lower usual prime values of $n$ still yield $\tau_n$-atoms for higher usual prime values of $n$. We shall examine one more usual prime value of $n$ before we present our findings.

\end{section}
\begin{section}{The $\tau_{11}$-atoms}
\quad The next usual prime is $11$, and this is the largest jump we have made thus far. Previously, we have only increased the value of $n$ by either one or two; this time we advance by four. As a result, we also gain more $\mu_{11}$ equivalence classes: other than $[0]$, we now have $[1], [2], [3], [4],$ and $[5]$. This makes things far more complicated. We shall denote elements of $[1]$ by $x_0$, as usual, and we denote elements of $[2^i]$ by $x_i$ where $1\leq i\leq 4$. 

As always, we know that the usual primes are $\tau_{11}$-atoms, and we know by Theorem \ref{onefactoratom} that integers with usual prime factorizations of the form $11p_1p_2...p_k$ where $p_i$ is a usual prime not equal to $11$ are $\tau_{11}$-atoms. Given our observations thus far, it is also reasonable to ask whether integers with prime factorizations of the form $x_0*x_0*...*x_0*x_i$ where $i\neq 0$ and of the form $x_i*x_j$ where $i\neq j$ are $\tau_{11}$-atoms, since they are $\tau_n$-atoms for usual prime $n<11$, and indeed we see they are, for the same reasons outlined in the previous sections. In fact, in the latter case we can even include $x_0$ factors as well, so long as the product $x_i*x_j$ does not yield an $x_0$ factor; in other words, if we have usual primes $x_i$ and $x_j$ such that $i+j\neq 0$ (mod $5$), then $x_0*x_0*...*x_0*x_i*x_j$ is a $\tau_{11}$-atom. However, yet again we see that there must be other conditions which can be met to produce a $\tau_{11}$-atom; it just so happens that $50$ is a $\tau_{11}$-atom, but does not meet any of the aforementioned criteria. Nor does $296$, another $\tau_{11}$-atom. In fact, the usual prime factorizations of these two integers do not even have much in common, with $50=2*5*5=x_1*x_4*x_4$ and $296=2*2*2*37=x_1*x_1*x_1*x_2$. 

\begin{theorem}\label{tau11atoms}
The $\tau_{11}$-atoms are the usual primes, along with integers whose usual prime factorizations are of the form $11p_1p_2...p_k$ where $p_i$ is a usual prime not equal to $11$, or integers whose usual prime factorizations can be expressed as follows:
\begin{itemize*}
\item $x_0*x_0*...*x_0*x_i$ where $i\neq 0$\\
\item $x_i*x_j$ where $i\neq j$\\
\item $x_0*x_0*...*x_0*x_i*x_j$, where $i\neq j$ and $i+j\neq 0$ (mod $5$)\\
\item $x_i*x_i*x_j$ where $0\neq i, j$ (mod $5$), $i\neq j$ and $2i\neq j$ (mod $5$)\\
\item $x_0*x_0*...*x_0*x_i*x_i*x_j$, where $i\neq j$, $2i\neq j$ (mod $5$), and $2i+j\neq 0$ (mod $5$)\\
\item $x_i*x_i*x_i*x_{2i(\emph{mod } 5)}$ where $i\neq 0$ (mod $5$).
\end{itemize*}
\end{theorem}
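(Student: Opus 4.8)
The plan is to recast the problem combinatorially and then run the two standard directions: show each of the six listed forms is a $\tau_{11}$-atom, and show nothing else (coprime to $11$, not a usual prime) is. Using the indexing of Theorem \ref{mureps} with $a=2$ — so $x_i$ denotes the class $[2^i]$ for $0\le i\le 4$ — together with the rule $x_ix_j\mu_{11}x_{i+j\,\mathrm{mod}\,5}$, a proper $\tau_{11}$-factorization of $x$ is exactly a partition of the multiset of prime-factor indices into $m\ge 2$ blocks whose index-sums are all congruent mod $5$. By Theorem \ref{onefactoratom} I may assume $11\nmid x$ and that $x$ is not a usual prime, and I record $x$ by the counts $c_0,\dots,c_4$ of its prime factors in each class; write $n_+=c_1+c_2+c_3+c_4$ for the number of nonzero-index factors. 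The theorem then reduces to determining exactly which such multisets admit no equal-sum partition into $\ge 2$ blocks.

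For the forward direction I would check each form directly, the stated inequalities being precisely the obstructions. Since $x_0$ factors contribute $0$ to any block-sum, they can only be absorbed into a block or form a sum-$0$ block; this is why $x_0$'s are harmless in forms $1$, $3$, $5$ but forbidden in forms $2$, $4$, $6$. The representative case is form $6$, $x_ix_ix_ix_{2i}$: its total index-sum is $5i\equiv 0$, and since $i$ is invertible mod $5$ no proper nonempty sub-block sums to $0$, which kills every $2$- and $3$-block partition, while the all-singleton ($4$-block) partition fails because $i\not\equiv 2i$. The others are shorter: $x_ix_j$ with $i\ne j$ admits only the unequal two-singleton partition, and form $5$ is an atom precisely because neither $2i\equiv j$ nor $2i+j\equiv 0$ holds.

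For completeness I would show every other multiset factors, organizing by $n_+$. The cases $n_+=0$ (all $x_0$) and $n_+=1$ (form $1$) are immediate. For $n_+=2$ the two nonzero indices are either equal — split them into two blocks and distribute the $x_0$'s for equal sums — or distinct, which gives form $2$, or form $3$, or (when an $x_0$ is present and $i+j\equiv 0$) factors by merging both into one sum-$0$ block against a sum-$0$ block of $x_0$'s. For $n_+=3$ I would use the arithmetic fact that among any three distinct nonzero residues mod $5$ one is the sum of the other two (settling the all-distinct case), note that three equal indices split into three equal singletons, and reduce the two-equal case $x_ix_ix_j$ to forms $4$ and $5$ by checking exactly when $2i\equiv j$ or $2i+j\equiv 0$. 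For $n_+=4$ I would isolate $\{i,i,i,2i\}$ (form $6$) as the unique non-factorable pattern and exhibit an explicit equal-sum split for every other four-factor pattern.

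The hard part will be the completeness argument for large $n_+$: I must show that \emph{every} multiset with $n_+\ge 5$ (together with the non-form four-factor multisets) admits an equal-sum partition, so that the list genuinely terminates at four nonzero factors. The clean tool is a pigeonhole argument on partial sums: ordering the nonzero factors, the $n_++1\ge 6$ partial sums lie in the five residue classes mod $5$, forcing a consecutive zero-sum sub-block, which — combined with the freedom to reorder, to pair equal indices, and to place the wildcard $x_0$'s — should always yield a $2$-block split of equal sum (a Davenport-constant–style statement for $\Z/5\Z$). Making this exhaustive, correctly handling the asymmetry introduced by the $x_0$ factors and the boundary patterns where no proper sub-block is zero-sum (e.g. all indices equal, where one instead uses the singleton partition), is the crux that the explicit case analysis must nail down.
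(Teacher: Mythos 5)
Your reformulation---a proper $\tau_{11}$-factorization of an integer coprime to $11$ is exactly a partition of the multiset of $\mu_{11}$-indices into at least two nonempty blocks with pairwise congruent sums mod $5$---is correct, and your organization is genuinely different from the paper's. The paper builds a $5^4=625$-entry table of all multisets with at most four factors in each nonzero class, checks it by hand, and disposes of higher multiplicities by extracting five copies of a single $x_i$ as an $x_0$; you instead stratify by the total number $n_+$ of nonzero-index factors and try to close the argument at $n_+\ge 5$ with a zero-sum lemma. Your forward direction and your cases $n_+\le 3$ are sound (in particular, the claim that among any three distinct nonzero residues mod $5$ one is the sum of the other two checks out on all four triples), and if completed your route would be far shorter and more explanatory than the paper's enumeration. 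Note, though, that your $n_+=4$ step is still a finite enumeration ($35$ index-multisets plus the $x_0$ interaction) that must actually be carried out, just as the paper's table must.

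The step you yourself flag as the crux is not merely unfinished; the mechanism you propose is aimed at the wrong target. A pigeonhole on partial sums yields a proper nonempty sub-block $B$ of index-sum $\equiv 0\pmod 5$, but the two-block partition $\{B,\,S\setminus B\}$ then has sums $0$ and $T$ (the total), which agree only when $T\equiv 0$. Concretely, for $y=x_1x_1x_1x_1x_2$ one has $T\equiv 1$: the zero-sum block $\{1,1,1,2\}$ leaves a complement of sum $1$, so no factorization results, whereas the split that works is $\{1,1,1\}$ against $\{1,2\}$, both of sum $3\equiv 3T$. The correct target for a two-block split is a proper nonempty sub-multiset of sum $\equiv 3T$ (the ``half'' of $T$ mod $5$), and the correct tool is the subset-sum covering lemma: for $k\ge 4$ nonzero elements of $\Z/5\Z$, the set consisting of $0$ together with all nonempty subset sums is all of $\Z/5\Z$. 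This closes every case with $T\not\equiv 0$ at once (the sub-multiset of sum $3T$ cannot be the whole multiset, since $3T\equiv T$ forces $T\equiv 0$), while $T\equiv 0$ still needs the separate treatment you gesture at---either a proper nonempty zero-sum sub-multiset when the indices are not all equal, or the all-singleton partition when they are. Until that substitution is made, the $n_+\ge 5$ case, and hence the completeness half of the theorem, is not established.
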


\begin{proof}As a first step toward discovering all types of $\tau_{11}$-atoms, a table was formed which detailed all possible combinations of $\mu_{11}$ equivalence class representatives (other than $x_0$) with at most four representatives per class. This table was manually checked, case by case, for which combinations were reducible and which were not. The table itself is not listed here due to its size: there are a total of 625 cases to check. 

Of the irreducible elements, most can be readily described by the cases of the $\tau_7$-atoms, namely the first 5 cases listed in the statement of the theorem above (from usual primes through $x_0*x_0*...x_0*x_i*x_j$). At this point, then, we briefly show that the last 3 cases do indeed describe $\tau_{11}$-atoms.

\underline{$x_i*x_i*x_j$ where $0\neq i, j$ (mod $5$), $i\neq j$ and $2i\neq j$ (mod $5$)}: Since $2i\neq j$ (mod $5$), then clearly $x_{2i}*x_j$ cannot be a $\tau_{11}$-factorization; thus, consider the only alternative: $x_{i+j}*x_i$. Suppose, by way of contradiction, that this is a $\tau_{11}$-factorization. Then $i+j=i$ (mod $5$), and so $j=0$ (mod $5$). But $j\neq 0$ (mod $5$); here arises a contradiction. Thus, an integer with a usual prime factorization of this type must be a $\tau_{11}$-atom. The proof of the next case is clearly covered by this one with the addition of the final condition, $2i+j\neq 0$ (mod $5$).

\underline{$x_i*x_i*x_i*x_{2i(\emph{mod } 5)}$ where $i\neq 0$ (mod $5$)}: Since $ni\neq mi$ (mod $5$) for all $n\neq m$ where $0<n,m<5$, any arrangement of these factors clearly does not result in a $\tau_{11}$-factorization. Thus, an integer with a usual prime factorization of this type must be a $\tau_{11}$-atom.

Now we must show that this list of cases is exhaustive. As the initial table only considered sets of factors where there were at most 4 representatives per $\mu_{11}$ equivalence class (other than $x_0$), it is necessary to prove that any combination of factors involving more than 4 representatives per $\mu_{11}$ equivalence class must not be an atom. First, note that any product of 5 elements from the same $\mu_{11}$ equivalence class must yield an $x_0$ element, since $5i \equiv 0$ (mod $5)$. Take some arbitrary combination of $\mu_{11}$ representatives already considered (that is, one with at most $4$ representatives per equivalence class other than $x_0$), and call its product $x$. Then, for each $i = 1, 2, 3, 4$, consider $y_i=x*x_i*x_i*x_i*x_i*x_i$. It is evident that, if $x$ was not a $\tau_{11}$-atom, then $y_i$ must not be for any value of $i$, since the product of the $x_i$ terms produces an $x_0$ term, which can be trivially absorbed into the $\tau_{11}$-factorization of $x$. Thus, we are only concerned with the case when $x$ is a $\tau_{11}$-atom; that is, when the factorization of $x$ meets any of the criteria listed in the theorem. In this case it is easy to show (again, by exhaustion) that, regardless of $i, y_i$ is not a $\tau_{11}$-atom; again, an explicit proof is not listed here in consideration of length. This means, then, that any factorization involving $5$ or more representatives from a single $\mu_{11}$ equivalence class other than $x_0$ must nessarily be reducible. Thus, the cases listed in the theorem must exhaust all $\tau_{11}$ atoms. 
\end{proof} 

It seems that as $n$ increases, the number of cases of usual prime factorizations that result in $\tau_n$-atoms generally increases as well. This is bothersome, and so we wish to generalize our findings thus far in the hope of finding a way to generalize all $\tau_n$-atoms in the future.
\end{section}
\begin{section}{Generalization to $\tau_n$-atoms}
To cut right to the chase, we were indeed able to generalize our findings in a satisfactory way:

\begin{theorem}\label{generalization}
Let $n$ be an odd prime, and let $y$ be an integer with a usual prime factorization of the form
$$
y=x_{01}*x_{02}*...*x_{0k}*x_{i1}*x_{i2}*...*x_{im}*x_j,
$$
where $x_{ab}\in[x_a], x_{ab}\neq n$ for all $a, b$, and $0\neq i, j$. 
\\ \\
Then $y$ is a $\tau_n$-atom if the following conditions hold:
\begin{enumerate*}
\item if, for some integer $z$, $z|m$, then $zi\not\equiv j$ (mod $\frac{n-1}{2}$),\\
\item if $mi+j\equiv 0$ (mod $\frac{n-1}{2}$), then $k=0$, and\\
\item $ci+j\not\equiv 0$ (mod $\frac{n-1}{2}$) for any $c<m-1$.
\end{enumerate*}
\end{theorem}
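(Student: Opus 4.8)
The plan is to recast the entire question in the additive index language developed around Theorem \ref{mureps}. Writing $M = \frac{n-1}{2}$, each usual prime factor of $y$ not divisible by $n$ sits in one of the classes $[a^0], \dots, [a^{M-1}]$, and by the rule $x_ax_b \mu_n x_{a+b \bmod M}$ the product of a collection of factors lands in the class whose index is the sum mod $M$ of the indices of its pieces. Thus $y$ is encoded by the multiset of indices consisting of $k$ copies of $0$, $m$ copies of $i$, and a single copy of $j$, and a \emph{proper} $\tau_n$-factorization of $y$ is exactly a partition of this multiset into $g \geq 2$ nonempty blocks whose index-sums are all congruent to a common value $t \pmod M$ (the class $[a^t]$ shared by every factor). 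I would prove the contrapositive: assuming such a partition exists, I would show that at least one of the three listed conditions must fail. The hypotheses $x_{ab} \neq n$ and $i, j \neq 0$ guarantee that no factor lies in $[0]$, so by Theorem \ref{onefactoratom} we are genuinely in the regime these conditions are designed to govern.

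First I would fix notation for an arbitrary such factorization. The lone index-$j$ factor lies in one block, say $A_1$, together with $c$ of the index-$i$ factors; each remaining block $A_s$ ($2 \le s \le g$) holds $c_s \ge 0$ index-$i$ factors plus possibly some index-$0$ factors. This yields the governing equations $ci + j \equiv t$ and $c_s i \equiv t \pmod M$, together with $c + \sum_{s \ge 2} c_s = m$. A block with $c_s = 0$ is forced to consist only of index-$0$ factors, which requires both $t \equiv 0$ and $k \ge 1$. I would then split on whether $t \equiv 0$.

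For the branch $t \equiv 0$ the target is concrete and the analysis is clean. If the $j$-block carries $c \le m-2$ index-$i$ factors, then $ci + j \equiv 0$ with $c < m-1$, contradicting condition (1) of the enumerate list (the $ci+j\equiv 0$ clause, condition 3). If $c = m-1$, a single index-$i$ factor remains and must occupy a block by itself (or with index-$0$ factors), forcing $i \equiv t \equiv 0$, impossible since $i \neq 0$; so this subcase cannot occur. If $c = m$, then $A_1$ has index $mi + j \equiv 0$ while every other block is nonempty and index-$i$-free, hence purely index-$0$, so $k \ge 1$; this is precisely the failure of condition (2). The branch $t \equiv 0$ is therefore fully absorbed by conditions (2) and (3). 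In the branch $t \not\equiv 0$ no pure-zero blocks are possible, so every block other than the one holding $x_j$ contains an index-$i$ factor; the cleanest configuration, in which $c = 0$ and the $m$ index-$i$ factors split into equal blocks of size $z$, produces $m = z(g-1)$ with $zi \equiv t = j$, i.e.\ a divisor $z \mid m$ with $zi \equiv j$, contradicting condition (1).

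The main obstacle is the remainder of the branch $t \not\equiv 0$: the blocks need not have equal size and the $j$-block may carry $c \ge 1$ factors. Subtracting the block equations gives $(c_s - c)\,i \equiv j \pmod M$ for every $s \ge 2$, so the block sizes are all tied together through $i$ and $j$; but when $\gcd(i,M) > 1$ these congruences admit several incongruent sizes and do not immediately produce a divisor of $m$, nor do they force the target to be $j$ or $0$. The crux of the argument is to show that any such irregular partition either collapses into one of the three canonical witnesses above or otherwise forces the numerical data to violate a listed condition, so that conditions (1)--(3) are genuinely exhaustive. I expect reconciling the divisibility demanded by condition (1) with arbitrary unequal block sizes and a nonzero $c$ to be the delicate step, and the point at which the interaction between $\gcd(i,M)$ and $m$ must be handled with real care.
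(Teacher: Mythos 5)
Your reduction of the problem to partitions of the index multiset --- $k$ zeros, $m$ copies of $i$, one copy of $j$ --- into $g\geq 2$ blocks of common index-sum $t$ modulo $M=\frac{n-1}{2}$ is exactly the right frame, and your disposal of the branch $t\equiv 0$ (splitting on $c\leq m-2$, $c=m-1$, $c=m$) is complete and in fact more careful than the paper's corresponding paragraph. But the proposal is not a proof: you explicitly defer the branch $t\not\equiv 0$ with unequal block sizes and $c\geq 1$, and that branch is not a technical loose end --- it is the substance of the theorem. As written you have only checked the conditions against two special configurations (target class $[x_0]$, and target $[x_j]$ with $c=0$ and equal blocks), so nothing yet excludes a factorization into some fourth class $[x_g]$ with ragged blocks, which is precisely the case the paper spends its last paragraph on.

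The idea you are missing is the equalization step that closes this case in the paper. If two blocks not containing $x_j$ carry $b>c'$ index-$i$ factors and both have index-sum $t$, then $(b-c')i\equiv 0\pmod{M}$, so those $b-c'$ surplus factors multiply to an index-$0$ element which can be transferred into the block containing $x_j$ without changing the class of either block. Iterating, any factorization with $t\not\equiv 0$ normalizes to one in which every $x_j$-free block holds the same number $z\geq 1$ of index-$i$ factors; then $z$ divides the number $a=(g-1)z$ of index-$i$ factors outside the $x_j$-block, and the two congruences $zi\equiv t$ and $(m-a)i+j\equiv t\pmod{M}$ combine into $(m-a-z)i+j\equiv 0\pmod{M}$, which condition 3 (and, in the degenerate cases, conditions 1 and 2) rules out --- this is the paper's final dichotomy on $\frac{az}{z-1}$. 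This is exactly the tool for the ``interaction between $\gcd(i,M)$ and $m$'' you flag as delicate: you never need to solve the congruences for the block sizes, only to observe that discrepancies between blocks are index-$0$ and can be absorbed. Until you carry out this reduction (and verify that the transfer leaves every block nonempty, so the factorization stays proper), the argument for $t\not\equiv 0$ is a statement of intent rather than a proof.
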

\begin{proof}
There are only finitely many ways in which $y$ might fail to be an atom. If one were trying to properly $\tau_n$-factor $y$, one might first try to create a proper $\tau_n$-factorization involving terms already present in the usual prime factorization of $y$, namely $x_0, x_i,$ or $x_j$ factors.

Starting with the possibility of a proper $\tau_n$-factorization involving all $x_0$ terms, clearly this would imply that $mi+j\equiv 0$ (mod $\frac{n-1}{2}$), and so by condition $2$ we see that $k=0$; that is, there are no standard prime factors of $y$ in the $\mu_n$ equivalence class $[x_0]$. Since $j\neq 0$, then, it must be necessary to take the product of $x_j$ with some number of terms from $[x_i]$ in order to produce a single term in $[x_0]$, but condition $3$ maintains that it will require at least $m-1$ such terms. Thus, we are either left with a factorization of the form $x_0*x_i$ or merely $x_0$, and since $i\neq 0$ then both cases fail to produce a proper $\tau_n$-factorization. Since a $\tau_n$-factorization of terms in $[x_0]$ is impossible, then, we shall ignore all terms in $[x_0]$ henceforth, as they are absorbed into any arbitrary product. 

Next, let us consider the possibility of a proper $\tau_n$-factorization of terms all in $[x_i]$. Again, by condition $3$, this case can quickly be discounted. 

Finally, consider the possibility of a proper $\tau_n$-factorization of terms all in $x_j$. Then clearly we must multiply sets of terms from $[x_i]$ in order to obtain at least one term from $[x_j]$. Suppose the minimum number of terms is $d$; that is, if $ki\equiv j$ (mod $\frac{n-1}{2}$) then $k\geq d$. Due to condition $1$, then, we know that $d\nmid m$; thus, there must be some leftover number of terms from $[x_i]$ which must produce an $x_j$ term. Call this positive integer $c\neq d$. \\
Suppose $c>d$. Then $c=d+r$ for some integer $r>0$. Then we see that $j\equiv di\equiv ci\equiv (d+r)i$ (mod $\frac{n-1}{2})$, and so $di\equiv (d+r)i$ (mod $\frac{n-1}{2})$. Thus, $ri\equiv 0$ (mod $\frac{n-1}{2})$. Suppose, then, that $r<d$. Then $(d-r)i\equiv di\equiv j$ (mod $\frac{n-1}{2})$; yet $d-r<d$, and $d$ is the minimum number of $x_i$ terms such that their product is equivalent to $j$, and so a contradiction arises. Thus, $r$ must be greater than $d$. Let $r-d=s$. Then $ri\equiv (d+s)i\equiv 0$ (mod $\frac{n-1}{2})$, and since $di\equiv j$ (mod $\frac{n-1}{2})$, then this means $si+j\equiv 0$ (mod $\frac{n-1}{2})$. But certainly, since $s<d$ and $d<m$ then $s<m-1$, and this is in violation of condition $3$. Thus, we must conclude that $c<d$. Then, since $d$ is the minimum number of $x_i$ terms necessary to produce an $x_j$ term, $ci$ must be equivalent to $0$ (mod $\frac{n-1}{2})$. Notice, though, that this implies that $di\equiv di-ci\equiv (d-c)i\equiv j$ (mod $\frac{n-1}{2})$, and clearly $d-c<d$; yet, again, $d$ is the minimum number of $x_i$ terms required to produce an $x_j$ term. Again, a contradiction arises, and thus a $\tau_n$-factorization of $y$ involving terms only from $[x_j]$ must be impossible.


Therefore, if $y$ is to have a proper $\tau_n$-factorization, it must be composed of terms all from some $\mu_n$ equivalence class other than $[x_0], [x_i],$ and $[x_j]$. Call this class $[x_g]$. Then suppose $y=x_{g1}*x_{g2}*...*x_{gz}$ for some positive integer $z$. Then $mi+j\equiv zg$ (mod $\frac{n-1}{2}$). Further, since condition $3$ does not permit us to multiply $x_j$ with some number of $x_i$ terms to yield an $x_0$ term and still create such a factorization, we see that some product involving $x_i$ terms and $x_j$ terms must produce an $x_g$ term; that is, $(m-a)i+j\equiv g$ (mod $\frac{n-1}{2}$) for some positive integer $a<m$. The remainder of the $x_i$ terms must account for all the remaining $x_g$ terms; that is, $ai\equiv (z-1)g$ (mod $\frac{n-1}{2}$). Now, as we are attempting to separate these $a$ terms into $(z-1)$ groups, each of which are multiplied to yield a single $x_g$ term, we have two options: either these $(z-1)$ groups all have the same number of $x_i$ terms, or they do not. Suppose they do not; then select two groupings which have differing numbers of $x_i$ terms, say $b>c$. Then $bi\equiv ci\equiv g ($mod $\frac{n-1}{2})$. But then $(b-c)i\equiv 0($mod $\frac{n-1}{2})$, and so we see that we can simply absorb $(b-c)$ terms from the larger grouping into a single $x_0$ term, and absorb that into our previous product of $(m-a)$ $x_i$ terms and the single $x_j$ term; in short, we will simply increase the value of $a$ until we have $(z-1)$ groupings of an equal number $x_i$ terms, each of which multiply to produce a single $x_g$ term. Returning to our previous equivalence $ai\equiv (z-1)g($mod $\frac{n-1}{2})$, since the groupings of $x_i$ factors have an equal number of terms, this implies that $(z-1)|a$. Thus $\frac{a}{z-1}$ must be an integer. Then $\frac{a}{z-1}i\equiv g$ (mod $\frac{n-1}{2}$); thus by transitivity $\frac{a}{z-1}i\equiv (m-a)i+j$ (mod $\frac{n-1}{2}$). Then $(m-\frac{az}{z-1})i+j\equiv 0$ (mod $\frac{n-1}{2})$, so by condition $3$ we see that $\frac{az}{z-1}$ must be either $1$ or $0$.

\emph{Case 1}: $\frac{az}{z-1}=0$. Then either $a=0$ or $z=0$, but both $a$ and $z$ are strictly positive; a contradiction arises.

\emph{Case 2}: $\frac{az}{z-1}=1$. Then $az=z-1$, so $az<z$. This implies that $a<1$. But $a$ is a positive integer, so $a\geq 1$. Again, we see a contradiction. 

Therefore, no $\tau_n$-factorization of $y$ can exist, and so $y$ must be a $\tau_n$-atom.
\end{proof}

Looking back at our previous theorems, one can see that this generalization does indeed cover all $\tau_n$-atoms mentioned in the paper other than the usual primes and integers of the form $np_1p_2...p_k$. We are pleased with this result, but more work is required before a complete generalization can be found. We stopped at $n=11$ for a reason: $n=13$ provides a truly difficult challenge, with atoms that can involve factors from at least $4$ different $\mu_{13}$ equivalence classes. Moreover, our approach to $\tau_{11}$ involving the spreadsheet doesn't seem applicable to $\tau_{13}$: while the $\tau_{11}$ sheet involved the more manageable number of $625$ cases, $\tau_{13}$ would involve $7776$ cases. We are very interested in the possibility of writing a program to generate and factor these cases far more efficiently, but until such a program is available to us we are left with the options of finding a new way to approach $\tau_{13}$ or simply enduring all those cases.
\end{section}
\clearpage


\begin{thebibliography}{9}
\bibitem{anderson}
Anderson, D.D and Frazier, Andrea. (2011). ``On a general theory of factorization in integral domains." Rocky Mountain Journal of Mathematics Vol. 41, Number 3: 663-705.  
\end{thebibliography}
\end{document}